\newtheorem{theorem}{Theorem}[section]
\newtheorem{corollary}[theorem]{Corollary}
\newtheorem{lemma}[theorem]{Lemma}
\theoremstyle{definition}
\newtheorem{defn}[theorem]{Definition}
\newtheorem{example}[theorem]{Example}
\newsavebox{\@brx}
\newcommand{\llangle}[1][]{\savebox{\@brx}{\(\m@th{#1\langle}\)}%
  \mathopen{\copy\@brx\kern-0.5\wd\@brx\usebox{\@brx}}}
\newcommand{\rrangle}[1][]{\savebox{\@brx}{\(\m@th{#1\rangle}\)}%
  \mathclose{\copy\@brx\kern-0.5\wd\@brx\usebox{\@brx}}}
\newcommand{\ldbc}{\llbracket}
\newcommand{\rdbc}{\rrbracket}
\newcommand{\GG}{\Gamma}
\newcommand{\Gg}{\gamma}
\newcommand{\GD}{\Delta}
\newcommand{\Gd}{\delta}
\newcommand{\GL}{\Lambda}
\newcommand{\vd}{\vdash}
\newcommand{\scut}{\mathsf{scut}}
\newcommand{\tl}{\otimes \mathsf{L}}
\newcommand{\tr}{\otimes\mathsf{R}}
\newcommand{\tll}{\otimes^{\mathsf{L}} \mathsf{L}}
\newcommand{\tlr}{\otimes^{\mathsf{R}} \mathsf{L}}
\newcommand{\trl}{\otimes^{\mathsf{L}} \mathsf{R}}
\newcommand{\trr}{\otimes^{\mathsf{R}} \mathsf{R}}
\newcommand{\pass}{\mathsf{pass}}
\newcommand{\unitl}{\mathsf{IL}}
\newcommand{\unitr}{\mathsf{IR}}
\newcommand{\ax}{\mathsf{ax}}
\newcommand{\id}{\mathsf{id}}
\newcommand{\ot}{\otimes}
\newcommand{\otl}{\otimes^{\mathsf{L}}}
\newcommand{\otr}{\otimes^{\mathsf{R}}}
\newcommand{\ol}{\mathbin{\diagup}}
\newcommand{\lo}{\mathbin{\diagdown}}
\newcommand{\lolli}{\multimap}
\newcommand{\lleft}{{\lolli}\mathsf{L}}
\newcommand{\lright}{{\lolli}\mathsf{R}}
\newcommand{\llolli}{\multimap^{\mathsf{L}}}
\newcommand{\rlolli}{\multimap^{\mathsf{R}}}
\newcommand{\llleft}{{\llolli}\mathsf{L}}
\newcommand{\rlleft}{{\rlolli}\mathsf{L}}
\newcommand{\llright}{{\llolli}\mathsf{R}}
\newcommand{\rlright}{{\rlolli}\mathsf{R}}
\newcommand{\I}{\mathsf{I}}
\newcommand{\comp}{\mathsf{comp}}
\newcommand{\comm}{\ot\mathsf{comm}}
\newcommand{\assl}{\mathsf{assoc}^{\mathsf{L}}}
\newcommand{\assr}{\mathsf{assoc}^{\mathsf{R}}}
\newcommand{\vdG}{\vdash_{\mathsf{G}}}
\newcommand{\vdL}{\vdash_{\mathsf{L}}}
\newcommand{\vdT}{\vdash_{\mathsf{T}}}
\newcommand{\darr}{{\downarrow}}
\newcommand{\NL}{$\mathtt{NL}$}
\newcommand{\NMILL}{$\mathtt{NMILL}$}
\newcommand{\LSkNL}{$\mathtt{LSkNL}$}
\newcommand{\LSkG}{$\mathtt{LSkG}$}
\newcommand{\LSkT}{$\mathtt{LSkT}$}
\newcommand{\RSkT}{$\mathtt{RSkT}$}
\newcommand{\SkBiC}{$\mathsf{SkMBiC}$}
\newcommand{\FSkBiC}{$\mathsf{FSkMBiC(At)}$}
\newcommand{\SkBiCT}{$\mathtt{SkMBiCT}$}
\newcommand{\SkBiCA}{$\mathtt{SkMBiCA}$}
\newcommand{\mc}[1]{\mathcal{#1}}
\newcommand{\mf}[1]{\mathsf{#1}}
\newcommand{\mbb}[1]{\mathbb{#1}}
\title{Semi-Substructural Logics {\`a} la Lambek}
\author{Cheng-Syuan Wan
\institute{Department of Software Science
\\
Tallinn University of Technology
\\
Tallinn, Estonia}
\email{cswan@cs.ioc.ee}
}
\begin{document}
\maketitle
\begin{abstract}
This work studies the proof theory of left (right) skew monoidal closed categories and skew monoidal bi-closed categories from the perspective of  non-associative Lambek calculus.
Skew monoidal closed categories represent a relaxed version of monoidal closed categories, where the structural laws are not invertible; instead, they are natural transformations with a specific orientation. 
Uustalu et al. used sequents with stoup (the leftmost position of an antecedent that can be either empty or a single formula) to deductively model left skew monoidal closed categories, yielding results regarding proof identities and categorical coherence.
However, their syntax does not work well when modeling right skew monoidal closed and skew monoidal bi-closed categories.

We solve the problem by constructing cut-free sequent calculi for left skew monoidal closed and skew monoidal bi-closed categories, reminiscent of non-associative Lambek calculus, with trees as antecedents.
Each calculus is respectively equivalent to the sequent calculus with stoup (for left skew monoidal categories) and the axiomatic calculus (for skew monoidal bi-closed categories).
Moreover, we prove that the latter calculus is sound and complete with respect to its relational models.
We also prove a correspondence between frame conditions and structural laws, providing an algebraic way to understand the relationship between the left and right skew monoidal (closed) categories.
\end{abstract}

\section{Introduction}\label{sec:intro}
Substructural logics are logic systems that lack at least one of the structural rules, weakening, contraction, and exchange.
Joachim Lambek's syntactic calculus \cite{lambek:mathematics:58} is a well-known example that disallows weakening, contraction, and exchange.
Another example, linear logic, proposed by Jean-Yves Girard \cite{girard:linear:87}, is a substructural logic in which weakening and contraction are in general disallowed but can be recovered for some formulae via modalities.
Substructural logics have been found in numerous applications from computational analysis of natural languages to the development of resource-sensitive programming languages.

\emph{Left skew monoidal categories} \cite{szlachanyi:skew-monoidal:2012} are a weaker variant of MacLane's monoidal categories where the structural morphisms of associativity and unitality are not required to be bidirectional, they are natural transformations with a particular orientation.
Therefore, they can be seen as \emph{semi-associative} and \emph{semi-unital} variants of monoidal categories. 
Left skew monoidal categories arise naturally in the semantics of programming languages \cite{altenkirch:monads:2014}, while the concept of semi-associativity is connected with combinatorial structures like the Tamari lattice and Stasheff associahedra \cite{zeilberger:semiassociative:19,moortgat:tamari:20}.

In recent years, Tarmo Uustalu, Niccolò Veltri, and Noam Zeilberger started a research project on \emph{semi-substructural} logics, which is inspired by a series of developments on left skew monoidal categories and related variants by Szlach{\'a}nyi, Street, Bourke, Lack and others \cite{szlachanyi:skew-monoidal:2012,lack:skew:2012,street:skew-closed:2013,lack:triangulations:2014,buckley:catalan:2015,bourke:skew:2017,bourke:skew:2018,bourke:lack:braided:2020}.

We call the languages of left skew monoidal categories and their variants \emph{semi-substructural} logics, because they are intermediate logics between (certain fragments of) non-associative and associative intuitionistic linear logic (or Lambek calculus).
Semi-associativity and semi-unitality are encoded as follows.
Sequents are in the form $S \mid \Gamma \vdash A$, where the antecedent consists of an optional formula $S$, called stoup, adapted from Girard \cite{girard:constructive:91}, and an ordered list of formulae $\Gamma$.
The succedent is a single formula $A$.
We restrict the application of introduction rules in an appropriate way to allow only one of the directions of associativity and unitality.

This approach has successfully captured languages for a variety of categories, including  $(i)$ left skew semigroup \cite{zeilberger:semiassociative:19}, $(ii)$ left skew monoidal \cite{uustalu:sequent:2021}, $(iii)$ left skew (prounital) closed \cite{uustalu:deductive:nodate}, $(iv)$ left skew monoidal closed categories \cite{UVW:protsn,veltri:multifocus:23}, and $(v)$ left distributive skew monoidal categories with finite products and coproducts \cite{VW:2023} through skew variants of the fragments of non-commutative intuitionistic linear logic consisting of combinations of connectives $(\I,\ot,\lolli,\land,\lor)$.
Additionally, discussions have covered partial normality conditions, in which one or more structural morphisms are allowed to have an inverse \cite{uustalu:proof:nodate}, as well as extensions with skew exchange à la Bourke and Lack \cite{veltri:coherence:2021,VW:2023}.

In all of the aforementioned works, internal languages of left skew monoidal categories and their variants are characterized in a similar way which we call sequent calculus {\`a} la Girard.
These calculi with sequents of the form $S \mid \GG \vd A$ are cut-free and by their rule design, they are decidable.
Moreover, they all admit sound and complete subcalculi inspired by Andreoli's focusing \cite{andreoli:logic:1992} in which
rules are restricted to be applied in a specific order.
A focused calculus provides an algorithm to solve both the proof identity problems for its non-focused calculus and coherence problems for its corresponding variant of left skew monoidal category.

By reversing all structural morphisms and modifying coherence conditions in left skew monoidal closed categories, right skew monoidal closed categories emerge \cite{uustalu:eilenberg-kelly:2020}.
Moreover, skew monoidal bi-closed categories are defined by appropriately integrating left and right skew monoidal closed structures.
It is natural for us to consider sound sequent calculi for these categories.
However, the implication rules are not well-behaved when just modeling right skew monoidal closed categories with sequent calculus {\`a} la Girard.

The problem stems from the skew structure concealed within the flat antecedent of $S \mid \GG \vd A$.
While the antecedent $S \mid \GG$ is defined similarly to an ordered list, it is actually a tree associating to the left.
We start in Section \ref{sec:syntax}, by introducing the sequent calculus {\` a} la Girard (\LSkG) for left skew monoidal closed categories from \cite{UVW:protsn} and its equivalent sequent calculus {\` a} la Lambek (\LSkT), which is inspired by sequent calculus 
for non-associative Lambek calculus \cite{bulinska:2009,moot:categorial:2012} with trees as antecedents.

In Section \ref{sec:skew:categories}, we introduce definitions of left (right) skew monoidal closed categories and skew monoidal bi-closed categories, and normality conditions for skew categories.
In Section \ref{sec:calculi:skbic}, we describe two calculi that characterize skew monoidal bi-closed categories: one is an axiomatic calculus (\SkBiCA), while the other is a sequent calculus (\SkBiCT) similar to the multimodal non-associative Lambek calculus \cite{moortgat:multimodl:1996}.
In Section \ref{sec:algebraic:relational:model}, we introduce the relational semantics for \SkBiCA~via preordered sets of possible worlds with ternary relations. 
Furthermore, we show a correspondence theorem (Theorem \ref{thm:main}) between conditions on ternary relations and structural laws on any frame.
The theorem allows us to prove a thin version of main theorems in \cite{uustalu:eilenberg-kelly:2020}.

\section{Sequent Calculus}\label{sec:syntax}
We recall the sequent calculus {\`a la} Girard for left skew monoidal closed categories from \cite{UVW:protsn}, which is a skew variant of non-commutative multiplicative intuitionistic linear logic.

Formulae ($\mf{Fma}$) in \LSkG~are inductively generated by the grammar $A, B::= X \ | \ \I \ | \ A \ot B \ | \ A \lolli B$, where $X$ comes from a set $\mathsf{At}$ of atoms, $\I$ is a multiplicative unit, $\ot$ is multiplicative conjunction and $\lolli$ is a linear implication.
\\
A sequent is a triple of the form $S \mid \Gamma \vdG A$, where the antecedent splits into: an optional formula $S$, called \emph{stoup} \cite{girard:constructive:91}, and an ordered list of formulae $\Gamma$ and succedent $A$ is a single formula.
The symbol $S$ consistently denotes a stoup, meaning $S$ can either be a single formula or empty, indicated as $S = {-}$; furthermore, $X$, $Y$, and $Z$ always represent atomic formulae.
\begin{defn}\label{eq:seqcalc:skmc:Gir}
 Derivations in \LSkG~are generated recursively by the following rules:
\begin{equation*}
	  \begin{array}{c}
		\infer[\ax]{A \mid \quad \vdG A}{}
		\quad
		\infer[\lleft]{A \lolli B \mid \Gamma , \Delta \vdG C}{
		  {-} \mid \Gamma \vdG A
		  &
		  B \mid \Delta \vdG C
		}
		\quad
		\infer[\unitl]{\I \mid \Gamma \vdG C}{{-} \mid \Gamma \vdG C}
		\quad
		\infer[\tl]{A \ot B \mid \Gamma \vdG C}{A \mid B , \Gamma \vdG C}
		\\[5pt]
    \infer[\pass]{{-} \mid A , \Gamma \vdG C}{A \mid \Gamma \vdG C}
		\quad
		\infer[\lright]{S \mid \Gamma \vdG A \lolli B}{S \mid \Gamma , A \vdG B}
		\quad
		\infer[\unitr]{{-} \mid \quad \vdG \I}{}
		\quad
		\infer[\tr]{S \mid \Gamma , \Delta \vdG A \ot B}{
		  S \mid \Gamma \vdG A
		  &
		  {-} \mid \Delta \vdG B
		}
	  \end{array}
	\end{equation*} 
\end{defn}

The inference rules of \LSkG~are similar to the ones in the sequent calculus for non-commutative multiplicative intuitionistic linear logic (\NMILL) \cite{abrusci:noncommutative:1990}, but with some crucial differences: 
\begin{enumerate}
\item The left logical rules $\unitl$, $\tl$ and $\lleft$, read bottom-up, are only allowed to be applied on the formula in the stoup position.
\item The right tensor rule $\tr$, read bottom-up, splits the antecedent of a sequent $S \mid \Gamma, \Delta \vdG A \ot B$ and in the case where $S$ is a formula, $S$ is always moved to the stoup of the left premise, even if $\Gamma$ is empty.
\item The presence of the stoup distinguishes two types of antecedents, $A \mid \Gamma$ and ${-} \mid A, \Gamma$. The structural rule $\pass$ (for `passivation'), read bottom-up, allows the moving of the leftmost formula in the context to the stoup position whenever the stoup is empty.
\item The logical connectives of \NMILL\ (and associative Lambek calculus) typically include two ordered implications $\lo$ and $\ol$, which are two variants of linear implication arising from the removal of the exchange rule from intuitionistic linear logic. In \LSkG, only the right residuation ($B \ol A = A \lolli B$) of Lambek calculus is present. 
\end{enumerate}
For a more detailed explanation and a linear logical interpretation of \LSkG, see \cite[Section 2]{UVW:protsn}.

\begin{theorem}
  \LSkG~is cut-free, i.e. the rules
  \begin{displaymath}
  \begin{array}{c}
    \infer[\mathsf{scut}]{S \mid \Gamma , \Delta \vdG C}{
		  \deduce{S \mid \Gamma \vdG A}{f}
		  &
		  \deduce{A \mid \Delta \vdG C}{g}
	}
		\qquad
	\infer[\mathsf{ccut}]{S \mid \Delta_0 , \Gamma , \Delta_1 \vdG C}{
		  \deduce{{-} \mid \Gamma \vdG A}{f}
		  &
		  \deduce{S \mid \Delta_0 , A , \Delta_1 \vdG C}{g}
    }
  \end{array}
\end{displaymath}
are admissible.
\end{theorem}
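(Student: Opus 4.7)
The plan is to prove admissibility of $\scut$ and $\mathsf{ccut}$ simultaneously, by lexicographic induction on the pair $(A, h)$, where $A$ is the cut formula and $h$ is the sum of the heights of the two premise derivations. Within each case I would dispatch on the last rule applied in each of the two premises, producing either a cut on a proper subformula of $A$ (handled by the outer induction hypothesis) or a cut with the same $A$ but strictly smaller $h$ (handled by the inner hypothesis). Because $\scut$ and $\mathsf{ccut}$ mutually recur in the reductions, both have to be in the induction simultaneously.

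I would begin with the \emph{principal} cases, where $A$ is introduced by a right rule in the left premise and by the matching left rule in the right premise: a $\scut$ whose left premise ends in $\tr$ and right premise ends in $\tl$ reduces to two smaller $\scut$s on the components of $A \ot B$; a $\scut$ whose right premise ends in $\lleft$ on $A \lolli B$ reduces to a $\scut$ on $A$ followed by a $\scut$ on $B$; and the $\unitl/\unitr$ case collapses immediately. The non-principal cases are handled by permuting the cut upward past the last rule of whichever premise is non-principal. The stoup discipline then forces the two cut rules to convert into each other in a controlled way: if the right premise of a $\scut$ ends in $\pass$ with the passivated formula being exactly the cut formula, the reduction yields a $\mathsf{ccut}$; conversely, reductions on $\mathsf{ccut}$ that eventually bring the cut formula to the stoup trigger a $\scut$. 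Since the induction is simultaneous, these exchanges are sound.

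The main obstacle is the asymmetry imposed by the skew structure, concentrated in the rules $\tr$ and $\lleft$. The rule $\tr$, read bottom-up, always sends the stoup $S$ of the conclusion to the \emph{left} premise, irrespective of whether the left context is empty; therefore when reducing a $\mathsf{ccut}$ whose right premise ends in $\tr$, I must locate the cut formula inside the correct half of the split antecedent $\Gamma, \Delta$, and the symmetric alternative is simply not available. Similarly, $\lleft$ demands that its left premise has empty stoup, so commuting a $\scut$ past $\lleft$ on the left premise of $\lleft$ requires routing through $\mathsf{ccut}$ rather than $\scut$ in order to preserve this invariant. These are the cases in which the cut elimination argument departs from the familiar script for $\mathtt{NMILL}$.

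Beyond these, the bookkeeping is routine: the remaining permutations past $\pass$, $\lright$, $\unitl$, $\tl$, and the non-principal sides of $\tr$ and $\lleft$ follow the standard pattern, each producing cuts of strictly smaller height on the same formula. The overall structure thus matches the cut elimination argument already developed in \cite{uustalu:sequent:2021,UVW:protsn} for related skew fragments, and I would cite those as a template while isolating the $\tr$/$\lleft$ cases as the points that require genuine care.
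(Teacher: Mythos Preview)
Your overall plan matches the paper's: simultaneous admissibility of $\scut$ and $\mathsf{ccut}$, principal reductions on the cut formula, commuting conversions otherwise, and explicit attention to the skew asymmetry concentrated in $\tr$ and $\lleft$. The paper organises the induction slightly differently---primary structural induction on the left premise for $\scut$ and on the right premise for $\mathsf{ccut}$, with subinduction on the other premise or on the cut formula when needed---whereas you use the lexicographic order on (cut formula, height sum); both schemes are standard and either works here.

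That said, several of your stated reductions assign the wrong cut rule, and in this calculus that is exactly the content of the stoup discipline. The right premise of a $\scut$ carries the cut formula in the \emph{stoup}, so it can never end in $\pass$; the interconversion you want runs the other way---a $\mathsf{ccut}$ whose right premise ends in $\pass$ on the cut formula becomes a $\scut$. Similarly, the principal $\ot$ case does not produce ``two smaller $\scut$s'': since the right $\tr$-premise has empty stoup, you get one $\scut$ on $A$ and one $\mathsf{ccut}$ on $B$. And the principal $\lolli$ case yields a $\mathsf{ccut}$ on $A$ (the left $\lleft$-premise has empty stoup) composed with a $\scut$ on $B$, which is precisely the reduction the paper displays. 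None of this breaks your strategy, but getting these right is what makes the mutual recursion between $\scut$ and $\mathsf{ccut}$ terminate.
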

\begin{proof}
The proof proceeds by induction on the height of derivations and the complexity of cut formulae.
Specifically, for $\mf{scut}$, we first perform induction on the left premise $f$, and if necessary, we perform subinduction on $g$ or the complexity of the cut formula $A$.
For $\mf{ccut}$, we start by performing induction on the right premise $g$ instead.
The cases other than $\lleft$ and $\lright$ have been discussed in \cite[Lemma 5]{uustalu:sequent:2021}, so we will only elaborate on the cases of $\lolli$.
\\
We first deal with $\scut$.
If $f = \lleft (f' , f'')$, then we permute $\mf{scut}$ up, i.e.
      \begin{displaymath}
        \small\begin{array}{c}
          \begin{array}{c}
            \infer[\mf{scut}]{A' \lolli B' \mid \GG , \GD , \GL \vdG C}{
              \infer[\lleft]{A' \lolli B' \mid \GG , \GD \vdG A}{
                \deduce{{-} \mid \GG \vdG A'}{f'}
                &
                \deduce{B' \mid \GD \vdG A}{f''}
              }
              &
              \deduce{A \mid \GL \vdG C}{g}
            }
          \end{array}
          \quad
          \mapsto
          \quad
          \begin{array}{c}
            \infer[\lleft]{A' \lolli B' \mid \GG , \GD , \GL \vdG C}{
              \deduce{{-} \mid \GG \vdG A'}{f'}
              &
              \infer[\mf{scut}]{B' \mid \GD , \GL \vdG C}{
                \deduce{B' \mid \GD \vdG A}{f''}
                &
                \deduce{A \mid \GL \vdG C}{g}
              }
            }
          \end{array}
        \end{array}
      \end{displaymath}
      If $f = \lright \ f'$, then we perform a subinduction on $g$:
      \begin{itemize}
        \item[--] If $g = \lleft (g' , g'')$, then 
        \begin{displaymath}
          \scriptsize\begin{array}{c}
            \begin{array}{c}
              \infer[\scut]{S \mid \GG , \GD , \GL \vdG C}{
                \infer[\lright]{S \mid \GG \vdG A \lolli B}{
                  \deduce{S \mid \GG , A \vdG B}{f'}
                }
                &
                \infer[\lleft]{A \lolli B \mid \GD , \GL \vdG C}{
                  \deduce{{-} \mid \GD  \vdG A}{g'}
                  &
                  \deduce{B \mid \GL \vdG C}{g''}
                }
              }
            \end{array}
            \quad
            \mapsto
            \quad
            \begin{array}{c}
             \infer[\mf{ccut}]{S \mid \GG , \GD , \GL \vdG C}{
              \deduce{{-} \mid \GD \vdG A}{g'}
              &
              \infer[\scut]{S \mid \GG , A , \GL \vdG C}{
                \deduce{S \mid \GG , A \vdG B}{f'}
                &
                \deduce{B \mid \GL \vdG C}{g''}
              }
             }
            \end{array}
          \end{array}
        \end{displaymath}
        where the complexity of the cut formulae is reduced.
        \item[--] For other rules, we permute $\mf{scut}$ up. For example, if $g = \lright \ g'$, then
        \begin{displaymath}
          \scriptsize\begin{array}{c}
            \begin{array}{c}
              \infer[\scut]{S \mid \GG , \GD \vdG A' \lolli B'}{
                \infer[\lright]{S \mid \GG \vdG A \lolli B}{
                  \deduce{S \mid \GG , A \vdG B}{f'}
                }
                &
                \infer[\lright]{A \lolli B \mid \GD \vdG A' \lolli B'}{
                  \deduce{A \lolli B \mid \GD , A' \vdG B'}{g'}
                }
              }
            \end{array}
            \quad 
            \mapsto
            \quad
            \begin{array}{c}
              \infer[\lright]{S \mid \GG , \GD \vdG A' \lolli B'}{
              \infer[\scut]{S \mid \GG , \GD , A' \vdG B'}{
                \infer[\lright]{S \mid \GG \vdG A \lolli B}{
                  \deduce{S \mid \GG , A \vdG B}{f'}
                }
                &
              \deduce{A \lolli B \mid \GD , A' \vdG B'}{g'}
              }   
            }
            \end{array}
          \end{array}
        \end{displaymath}
      \end{itemize}
For $\mf{ccut}$, if $g = \lright \ g'$, then we permute $\mf{ccut}$ up.
If $g = \lleft (g' ,g'')$, we permute $\mf{ccut}$ up as well, but depending on where the cut formula is placed, we either apply $\mf{ccut}$ on $f$ and $g'$ or $f$ and $g''$.
\end{proof}
Moreover, \LSkG~is sound and complete wrt. left skew monoidal closed categories \cite[Theorem 3.2]{UVW:protsn}.

By soundness and completeness, similar to the result in \cite{uustalu:sequent:2021} for skew monoidal categories, we mean that \LSkG~is deductively equivalent to the axiomatic characterization of the free left skew monoidal closed category.
\begin{equation*}\label{eq:seqcalc:skmc:Lam}
  \begin{array}{c}
        \infer[\id]{A \vdL A}{}
        \qquad
        \infer[\mathsf{comp}]{A \vdL C}{
          A \vdL B
          &
          B \vdL C
        }
        \qquad
      \infer[\otimes]{A \ot B \vdL C \ot D}{
        A \vdL C
        &
        B \vdL D
      }
      \qquad
      \infer[\lolli]{A \lolli B \vdL C \lolli D}{
        C \vdL A
        &
        B \vdL D
      }
      \\[5pt]
      \infer[\lambda]{\I \ot A \vdL A}{}
      \quad
      \infer[\rho]{A \vdL A \ot \I}{}
      \quad
      \infer[\alpha]{(A \ot B) \ot C \vdL A \ot (B \ot C)}{}
      \quad
      \infer=[\pi]{A \vdL B \lolli C}{A \ot B \vdL C}
  \end{array}
\end{equation*}
In particular, this is a semi-unital and semi-associative variation of Moortgat and Oehrle's calculus \cite[Chapter 4]{moot:categorial:2012} of non-associative Lambek calculus (\NL), where only right residuation is present.
We only care about sequent derivability in this section, therefore we omit the congruence relations on sets of derivations $A \vdL B$ and $S \mid \Gamma \vdG A$ that identify certain pairs of derivations.
However, the congruence relations are essential for these calculi being correct characterizations of the free left skew monoidal closed category.

The calculus \LSkG, being an equivalent presentation of a skew version of \NL,\  provides an effective procedure to determine formulae derivability in \LSkNL.
In other words, for any formula $A$, $ \vdL A$ if and only if ${-} \mid \quad \vdG A$.
Exhaustive proof search in \LSkG~always terminates, so for any $A$, either it finds a proof or it fails and there is no proof

Adapted from \cite{moot:categorial:2012}, we define trees inductively by the grammar $T::= \mf{Fma} \mid {-} \mid(T , T)$, where ${-}$ is an empty tree.
A context is a tree with a hole defined recursively as $\mc{C} ::= [\cdot] \mid ( \mc{C}, T ) \mid (T, \mc{C})$. 
The substitution of a tree into a hole is defined recursively:
\begin{displaymath}
  \begin{array}{rcl}
  subst([\cdot], U) &=& U
  \\
  subst((T',\mc{C}), U) &=& (T' , subst(\mc{C},U) )
  \\
  subst((\mc{C},T'), U) &=& (subst(\mc{C},U),T' )
  \end{array}
\end{displaymath}
We use $T[\cdot]$ to denote a context and $T[U]$ to abbreviate $subst(T[\cdot], U)$.
Sometimes we omit parentheses for trees when it does not cause ambiguity.
Sequents in \LSkT~are in the form $T \vdT A$ where $T$ is a tree and $A$ is a single formula.
\\
Derivations in \LSkT~are generated recursively by following rules:
\begin{equation*}\label{eq:seqcalc:skmc:Lam:tree}
  \begin{array}{lc}
  &
  \infer[\ax]{A \vdT A}{}
  \\[5pt]
  (\text{logical rules})
    &
    \quad
    \infer[\unitl]{T[\I] \vdT C}{T [{-}] \vdT C}
    \quad
    \infer[\unitr]{{-} \vdT \I}{}
    \quad
    \infer[\tl]{T [A \ot B] \vdT C}{T [A , B] \vdT C}
    \quad
    \infer[\tr]{T , U \vdT A \ot B}{
      T \vdT A
      &
      U \vdT B  
    }
    \\
    &
    \infer[\lleft]{T[A \lolli B , U] \vdT C}{
      U \vdT A
      &
      T[B] \vdT C
    }
    \quad
    \infer[\lright]{T \vdT A \lolli B}{T , A \vdT B}
    \\[5pt]
    (\text{structural rules})
    &
    \infer[\mf{assoc}]{T [(U_0 , U_1) , U_2] \vdT C}{T [U_0 , (U_1 , U_2)] \vdT C}
    \quad
    \infer[\mf{unitL}]{T [{-}, U] \vdT C}{T [U] \vdT C}
    \quad
    \infer[\mf{unitR}]{T[U] \vdT C}{T[U , {-}] \vdT C}
  \end{array}
\end{equation*}
This calculus is similar to the ones for \NL~\cite{moot:categorial:2012} and \NL~with unit \cite{bulinska:2009} but with semi-associative ($\mf{assoc}$) and semi-unital ($\mf{unitL}$ and $\mf{unitR}$) rules.
The structural rule $\mf{unitL}$, read bottom-up, removes an empty tree from the left. It helps us to correctly characterize the axiom $\lambda$ in \LSkT, i.e. $\I \ot A \vdT A$ is derivable while $A \vdT \I \ot A$ is not.
Analogously for the rule $\mf{unitR}$, from a bottom-up perspective, adds an empty tree from the right, and we cannot capture $\rho$ in \LSkT~without $\mf{unitR}$ (a double question mark $??$ means that there is no rule can be applied):
\begin{displaymath}
  \begin{array}{c}
    \infer[\tl]{\I \ot A \vdT A}{
      \infer[\unitl]{\I , A \vdT A}{
        \infer[\mf{unitL}]{{-} , A \vdT A}{
          \infer[\ax]{A \vdT A}{}
        }
      }
    }
    \qquad
    \infer[\mf{unitR}]{X \vdT \I \ot X}{
      \infer[\tr]{X,{-} \vdT \I \ot X}{
        \deduce{X \vdT \I}{??}
        &
        \deduce{{-} \vdT X}{??}
      }
    }
    \qquad
    \infer[\mf{unitR}]{A \vdT A \ot I}{
      \infer[\tr]{A , {-} \vdT A \ot \I}{
        \infer[\ax]{A \vdT A}{}
        &
        \infer[\unitr]{{-} \vdT \I}{}
      }
    }
    \qquad
    \infer[\tl]{X \ot \I \vdT X}{
      \infer[\unitl]{X , \I \vdT X}{
        \deduce{X , {-} \vdT X}{??}
      }
    }
  \end{array}
\end{displaymath}
\begin{theorem}\label{thm:cut:adm:LSkT}
  \LSkT~is cut-free, i.e. the rule
  \begin{displaymath}
    \begin{array}{c}
    \infer[\mf{cut}]{T[U] \vdT C}{
      \deduce{U \vdT A}{f}
      &
      \deduce{T[A] \vdT C}{g}
    }
    \end{array}
  \end{displaymath}
  is admissible.
\end{theorem}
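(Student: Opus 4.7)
The plan is to prove cut admissibility by lexicographic induction on the pair (complexity of the cut formula $A$, sum of heights of $f$ and $g$), dispatching cases by analysis on the last rule of $g$, with a secondary case analysis on $f$ for the principal subcases. The overall skeleton mirrors the proof for \LSkG~given earlier in this section, but the inductive bookkeeping must track positions within tree antecedents rather than flat lists.

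First I would handle the trivial base cases: if $g = \ax$ then $T[\cdot] = [\cdot]$ and $C = A$, so the conclusion is $f$ itself; dually if $f = \ax$ then $U = A$ and the conclusion is $g$. Next I would dispatch all non-principal cases, i.e.\ those where the last rule of $g$ operates on a subtree disjoint from the position of $A$ in $T[A]$. This covers the structural rules $\mf{assoc}$, $\mf{unitL}$, $\mf{unitR}$, the right rules $\lright$, $\tr$ (on either premise), and the left rules $\unitl$, $\tl$, $\lleft$ acting elsewhere in the tree. In each such case I rewrite $T[\cdot]$ to expose the subtree on which the rule acts, apply the induction hypothesis (using the drop in the height of $g$) to cut $f$ with the relevant premise, and then reapply the same rule. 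The only bookkeeping cost is tracking how the position of $A$ is affected by $\mf{assoc}$, $\mf{unitL}$, $\mf{unitR}$, which is routine by factoring $T[\cdot]$ through two-holed contexts.

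The principal cases are when $A$ is introduced by the last rule of $g$, which must therefore be a left rule. For $A = B \ot B'$ with $g = \tl(g')$ and $g' : T[B, B'] \vdT C$, I case on $f$: if $f = \tr(f_0, f_1)$ with $f_0 : U_0 \vdT B$ and $f_1 : U_1 \vdT B'$, I apply the primary IH twice, cutting on the strictly smaller subformulas $B$ and $B'$ in sequence; otherwise I permute cut up into $f$ using the secondary IH on the height of $f$. The case $A = B \lolli B'$ with $g = \lleft(g_0, g_1)$ is analogous: when $f = \lright(f')$ with $f' : T_f, B \vdT B'$ and $T_f = U$, I first cut $g_0 : U' \vdT B$ with $f'$ at the position of $B$ (primary IH on $B$), then cut the result with $g_1 : T'[B'] \vdT C$ (primary IH on $B'$); otherwise I permute cut up into $f$.

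The delicate principal case is $A = \I$, which has no proper subformulas, so the primary IH cannot reduce the cut formula. Here I rely on the secondary induction on the height of $f$: if $f = \unitr$ then $U = {-}$ and the conclusion $T[-] \vdT C$ is exactly the premise of $g = \unitl(g')$; otherwise $f$ ends in a rule whose last step can be permuted below the cut (for instance, if $f$ ends in $\unitl$, $\tl$, $\lleft$, $\mf{assoc}$, $\mf{unitL}$, or $\mf{unitR}$, I first cut the appropriate premise of $f$ with $g$ on the same formula $\I$, reducing the height of $f$, and then reapply the rule at the matching position inside $T[U]$). The cases $f = \tr$ and $f = \lright$ cannot occur, since their succedents are not $\I$. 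The main obstacle is exactly this $\I$-principal case together with the systematic context manipulation in the structural permutations; each step is routine but the combinatorics of the tree structure means the case count is substantially larger than for \LSkG, and this is where care must be taken to ensure the well-foundedness of the induction.
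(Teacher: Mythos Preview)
Your argument is correct, but the decomposition differs from the paper's. The paper takes the primary induction on the \emph{left} premise $f$, invoking a subinduction on $g$ (or on the complexity of $A$) only when $f$ ends in $\ax$ or a right rule such as $\unitr$, $\tr$, $\lright$. Whenever $f$ ends in $\unitl$, $\tl$, $\lleft$, or any of the structural rules $\mf{assoc}$, $\mf{unitL}$, $\mf{unitR}$, the paper simply permutes $\mf{cut}$ upward into $f$; this is cheap because the rule in question acts inside $U$, which sits intact as a subtree of $T[U]$, so the outer context is just the composite of $T$ with the inner context and no two-hole-context analysis is ever needed. Your route, leading with a case analysis on $g$, must instead track how the position of $A$ shifts under structural rules applied in $g$; this is routine but generates more cases. (One wording slip: you describe the non-principal structural-rule cases as acting on a subtree \emph{disjoint} from the position of $A$, but that subtree may well contain $A$; your later remark about tracking positions through $\mf{assoc}$, $\mf{unitL}$, $\mf{unitR}$ shows you do handle this, so the earlier phrasing should be corrected.) What your approach buys is a presentation closer to the textbook Gentzen pattern of lexicographic induction on complexity then height; what the paper's approach buys is that all the skew-specific structural rules are absorbed into a single ``one-premise left rule, permute up'' clause with no position-tracking at all.
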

\begin{proof}
  We perform induction on the structure of derivation $f$ of the left premise, and if necessary, we perform subinduction on the derivation $g$ or the complexity of the cut formula $A$.
  Cases of logical rules $\ax, \tl, \tr, \lleft$, and $\lright$ have been discussed in \cite{moot:categorial:2012}, so we only elaborate on the new cases arising in \LSkT.
  \begin{itemize}
    \item The first new case is that $f = \unitr$, then we inspect the structure of $g$.
    \begin{itemize}
      \item If $g = \ax : \I \vdT \I$, then we define $\mf{cut} (\unitr , \ax) = \unitr$.
      \item If $g = \unitl \ g'$, then there are two subcases:
      \begin{itemize}
        \item if the $\I$ introduced by $\unitl$ is the cut formula, then we define
        \begin{displaymath}
          \begin{array}{c}
          \infer[\mf{cut}]{T[{-}] \vdT C}{
              \infer[\unitr]{{-} \vdT \I}{}
              &
              \infer[\unitl]{T[\I] \vdT C}{
                \deduce{T[{-}] \vdT C}{g'}
              }
            }
          \end{array}
          \quad
          \mapsto
          \quad
          \begin{array}{c}
          \deduce{T[{-}] \vdT C}{g'}
          \end{array}
        \end{displaymath}
        \item if the $\I$ introduced by $\unitl$ is not the cut formula, then we define
        \begin{displaymath}
          \begin{array}{c}
            \infer[\mf{cut}]{T^{\{ \I := {-} \}}[\I] \vdT C}{
              \infer[\unitr]{{-} \vdT \I}{}
              &
              \infer[\unitl]{T[\I] \vdT C}{
                \deduce{T[{-}] \vdT C}{g'}
              }
            }
          \end{array}
          \quad
          \mapsto
          \quad
          \begin{array}{c}
            \infer[\unitl]{T^{\{ \I := {-} \}}[\I] \vdT C}{
              \infer[\mf{cut}]{T^{\{ \I := {-} \}}[{-}] \vdT C}{
                \infer[\ax]{{-} \vdT \I}{}
                &
                \deduce{T{-} \mid \vdT C}{g'}
              }
            }
          \end{array}
        \end{displaymath}
        where $T^{\{ \I := {-} \}}[\cdot]$ means that a formula occurrence $\I$ at some fixed position in the context has been replaced by ${-}$.
      \end{itemize}
      \item If $g = \mc{R} \ g'$, where $\mc{R}$ is a one-premise rule other than $\unitl$, then $\mf{cut} (\unitr , \mc{R} \ g') = \mc{R} (\mf{cut} (\unitr , g'))$.
      \item The cases of an arbitrary two-premises rule are similar.
    \end{itemize}
    \item Other new cases ($\unitl$ and structural rules) are in the type of one-premise left rules, where we can permute $\mf{cut}$ up. 
    For example, if $f = \mf{unitL} \ f'$, then we define 
    \begin{displaymath}
    \begin{array}{c}
      \begin{array}{c}
        \infer[\mf{cut}]{T[T'[{-} , U]] \vdT C}{
          \infer[\mf{unitL}]{T'[{-}, U] \vdT A}{
            \deduce{T'[U] \vdT A}{f'}
          }
          &
          \deduce{T[A] \vdT C}{g}
        }
      \end{array}
      \quad
      \mapsto
      \quad
      \begin{array}{c}
        \infer[\mf{unitL}]{T[T'[{-} , U]] \vdT C}{
          \infer[\mf{cut}]{T[T'[U]] \vdT C}{
            \deduce{T'[U] \vdT A}{f'}
            &
            \deduce{T[A] \vdT}{g}
          }
        }
      \end{array}
    \end{array}
    \end{displaymath}
    The other cases are similar.
  \end{itemize}
\end{proof}
The proof of equivalence relies on the following admissible rule, lemma and definition.
\begin{displaymath}
  \begin{array}{c}
    \infer[\unitl^{-1}]{T[{-}] \vdT C}{T[\I] \vdT C}
  \end{array}
\end{displaymath}
\begin{lemma}\label{lem:subst:T2G}
  Given a context $T[\cdot]$ and a derivation $f: A \mid \quad \vdG B$, there exists a derivation $f^*: T[A]^* \mid \quad \vdG T[B]^*$, where $T^*$ transforms a tree into a formula by replacing commas with $\ot$ and ${-}$ with $\I$, respectively.
\end{lemma}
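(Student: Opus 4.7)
The plan is to induct on the structure of the context $T[\cdot]$, whose grammar $\mc{C} ::= [\cdot] \mid (\mc{C}, T) \mid (T, \mc{C})$ has one base case and two symmetric recursive cases. In the base case $T[\cdot] = [\cdot]$, the substitutions produce $T[A]^* = A$ and $T[B]^* = B$, so I simply take $f^* = f$.

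In the inductive case $T[\cdot] = (T'[\cdot], U)$, the inductive hypothesis supplies a derivation $f' : T'[A]^* \mid \quad \vdG T'[B]^*$, and I must produce a derivation of $T'[A]^* \ot U^* \mid \quad \vdG T'[B]^* \ot U^*$. I would build this by first applying $\tr$ to $f'$ and the derivation ${-} \mid U^* \vdG U^*$ (obtained from $\ax$ followed by $\pass$) to get $T'[A]^* \mid U^* \vdG T'[B]^* \ot U^*$, and then applying $\tl$ to move $U^*$ into the stoup, yielding the desired sequent. The case $T[\cdot] = (U, T'[\cdot])$ is symmetric: now I apply $\pass$ to $f'$ to get ${-} \mid T'[A]^* \vdG T'[B]^*$, combine it with $\ax : U^* \mid \quad \vdG U^*$ via $\tr$ to produce $U^* \mid T'[A]^* \vdG U^* \ot T'[B]^*$, and finish with $\tl$ to conclude $U^* \ot T'[A]^* \mid \quad \vdG U^* \ot T'[B]^*$.

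The only point that requires care is the availability of identity derivations $U^* \mid \quad \vdG U^*$ for compound formulae $U^*$, since the tree $U$ plugged alongside the context may itself flatten to an arbitrary formula. Because $\ax$ as stated accepts any formula this is immediate; were the calculus formulated with only atomic axioms, I would first discharge this as a standard auxiliary lemma (identity admissibility) by induction on the structure of $U^*$, using $\unitl$, $\unitr$ at $\I$, the pair $\tl$, $\tr$ (together with $\pass$) at $\ot$, and the pair $\lleft$, $\lright$ at $\lolli$. I do not expect any further obstacles: the main induction on $T[\cdot]$ is essentially a congruence argument, and each inductive step is a short four-rule derivation pattern assembled from $\ax$, $\pass$, $\tr$, and $\tl$.
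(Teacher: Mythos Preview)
Your proposal is correct and matches the paper's proof essentially step for step: induction on the context, with the base case returning $f$ itself and each inductive step assembled from $\ax$, $\pass$, $\tr$, and $\tl$ in exactly the pattern you describe. The paper simply declares the $(U, T'[\cdot])$ case ``symmetric'' where you spell it out, and your remark on identity admissibility is moot here since $\ax$ is stated for arbitrary formulae.
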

\begin{proof}
  
  Proof proceeds by induction on the structure of $T[\cdot]$.
  \\
  If $T[\cdot] = [\cdot]$, then we have $T[A]^* = A$ and $T[B]^* = B$, and $f : A \mid \quad \vdG B$ by assumption.
  \\
  If $T[\cdot] = T'[\cdot] , T''$, then by inductive hypothesis, we have $f^*: T'[A]^* \mid \quad \vdG T'[B]^*$ and following derivation:
  \begin{displaymath}
    \infer[\tl]{T'[A]^* \ot T''^* \mid \quad \vdG T'[B]^* \ot T''^*}{
      \infer[\tr]{T'[A]^* \mid T''^* \vdG T'[B]^* \ot T''^*}{
        \deduce{T'[A]^* \mid \quad \vdG T'[B]^*}{f^*}
        &
        \infer[\pass]{{-}\mid T''^*  \vdG T''^*}{
          \infer[\ax]{T''^* \mid \quad \vdG T''^*}{}
        }
      }
    }
  \end{displaymath} 
  The other case ($T[\cdot] = T'', T'[\cdot]$) is symmetric.
\end{proof}
\begin{defn}
  We define an encoding function $\ldbc {-} \mid {-} \rdbc$ that transforms a tree and an ordered list of formulae into a tree associating to the left:
\begin{displaymath}
\begin{array}{c}
  \ldbc T \mid [\ ] \rdbc = T
  \\[5pt]
  \ldbc T \mid B , \GG \rdbc = \ldbc (T , B) \mid \GG\rdbc
\end{array}
\end{displaymath}
\end{defn}
With the above lemmata, definition, and functions $s(S)$ that maps a stoup to a formula (i.e. $s(S) = I$ if $S = {-}$ or $s(S) = B$ if $S = B$) and $T^*$ that transforms trees into formulae, we can state and prove the equivalence between \LSkG~and \LSkT.
\begin{theorem}\label{thm:equiv:LSkGLSKT}
The calculi \LSkG~and \LSkT~are equivalent, meaning that the two statements below are true:
  \begin{itemize}
    \item For any derivation $f: S \mid \Gamma \vdG C$, there exists a derivation ${\mf{G2T}} f : \ldbc s(S) \mid \GG \rdbc \vdT C$.
    \item For any derivation $f: T \vdT C$, there exists a derivation $\mf{T2G} f: T^* \mid \quad \vdG C$.
  \end{itemize}
\end{theorem}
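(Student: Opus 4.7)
\medskip
\noindent\textbf{Proof plan.}
The plan is to prove the two statements by separate inductions on the given derivation. In each rule case, the last inference is translated into a short derivation in the target calculus. The easy cases amount to matching up corresponding rules, but the structural mismatch between the stoup-plus-list antecedent of \LSkG~and the tree antecedent of \LSkT~forces a non-trivial use of the structural rules $\mf{assoc}$, $\mf{unitL}$, $\mf{unitR}$ (together with the admissible rule $\unitl^{-1}$) in the $\mf{G2T}$ direction, and of Lemma~\ref{lem:subst:T2G} together with the admissible rule $\mf{scut}$ in the $\mf{T2G}$ direction.

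For $\mf{G2T}$, I induct on $f: S \mid \Gamma \vdG C$. The cases $\ax$, $\unitl$ (trivial since $s(\I) = s({-}) = \I$), $\unitr$, $\tl$, and $\lright$ translate by applying the corresponding \LSkT-rule at the leftmost position of the encoded tree. The case $\pass$ converts a derivation of $\ldbc A \mid \Gamma \rdbc \vdT C$ into one of $\ldbc (\I, A) \mid \Gamma \rdbc \vdT C$ by first applying $\mf{unitL}$ top-down at the leftmost position (inserting a $({-}, A)$) and then $\unitl$ (turning the fresh ${-}$ into $\I$). The cases $\tr$ and $\lleft$ are more delicate: after applying the corresponding \LSkT-rule to the inductive hypotheses, one obtains a derivation of a tree containing a right-biased subtree $(U, \ldbc \I \mid \Gamma \rdbc)$ inside some outer context $T[\cdot]$, which must be straightened into the left-associated shape $\ldbc U \mid \Gamma \rdbc$. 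To close these cases I will prove an auxiliary lemma by induction on $\Gamma$: for every $T[\cdot]$, $U$, and $\Gamma$, if $T[(U, \ldbc \I \mid \Gamma \rdbc)] \vdT C$ is derivable then so is $T[\ldbc U \mid \Gamma \rdbc] \vdT C$. The base case ($\Gamma$ empty) applies $\unitl^{-1}$ followed by $\mf{unitR}$; the step case peels off the rightmost formula of $\Gamma$ using $\mf{assoc}$ top-down and then appeals to the inductive hypothesis with the outer context augmented by that formula.

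For $\mf{T2G}$, I induct on $f: T \vdT C$. The cases $\ax$, $\unitl$, $\unitr$, $\tl$ are immediate since the $*$-translation identifies the antecedents of premise and conclusion (using ${-}^{*} = \I$ and $(A, B)^{*} = A \ot B$). The case $\tr$ combines the inductive hypotheses using $\pass$, $\tr$, and $\tl$ in \LSkG. The case $\lright$ feeds the inductive hypothesis $T^{*} \ot A \mid \ \vdG B$ into $\mf{scut}$ against the easily derivable sequent $T^{*} \mid A \vdG T^{*} \ot A$ to recover $T^{*} \mid A \vdG B$, and then applies $\lright$. The remaining cases $\lleft$, $\mf{assoc}$, $\mf{unitL}$, $\mf{unitR}$ all follow a common recipe: first derive in \LSkG~a local sequent between single formulae (namely $(A \lolli B) \ot U^{*} \mid \ \vdG B$ for $\lleft$, and the skew structural morphisms $\alpha$, $\lambda$, $\rho$ for the three structural rules), then lift it through the ambient formula context using Lemma~\ref{lem:subst:T2G}, and finally compose with the inductive hypothesis via $\mf{scut}$.

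The main obstacle is the auxiliary straightening lemma for $\mf{G2T}$: one has to verify that $\mf{assoc}$, $\mf{unitR}$, and the admissible $\unitl^{-1}$ together suffice to re-associate an arbitrary right-biased unit-decorated subtree inside an outer context, and the induction has to be organised so that the manipulations happen deep inside that outer context (exploiting the deep-inference form in which the structural rules of \LSkT~are stated). Once this lemma is in place, the $\tr$ and $\lleft$ cases---and hence the whole theorem---follow routinely.
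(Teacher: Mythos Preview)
Your proposal is correct and follows essentially the same route as the paper: both directions are by induction on the given derivation, with $\mf{T2G}$ handled via Lemma~\ref{lem:subst:T2G} and $\mf{scut}$ exactly as you describe, and with the delicate $\tr$/$\lleft$ cases of $\mf{G2T}$ handled by re-associating the subtree $(U,\ldbc \I \mid \Delta\rdbc)$ into $\ldbc U\mid \Delta\rdbc$ using $\mf{assoc}$, $\unitl^{-1}$, and $\mf{unitR}$. The only difference is organisational: you isolate this re-association as an explicit auxiliary lemma proved by induction on the list (which cleanly covers the deep-context instance needed for $\lleft$), whereas the paper inlines it as a block of ``$\mf{assoc}^*$'' followed by a single $\unitl^{-1}$ and $\mf{unitR}$.
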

\begin{proof}
  Both $\mf{G2T}$ and $\mf{T2G}$ are proved by induction on height of $f$.

  For $\mf{G2T}$, the interesting cases are $\tr$ and $\lleft$.
  For example, if $f = \tr (f', f'')$, then by inductive hypothesis, we have two derivations $\mf{G2T} \ f': \ldbc s(S) \mid \GG \rdbc \vdT A$ and $\mf{G2T} \ f'': \ldbc \I \mid \GD \rdbc \vdT B$.
  Our goal sequent is $\ldbc\ldbc s(S) \mid \GG \rdbc \mid \GD \rdbc \vdT A \ot B$, which is constructed as follows:
  \begin{displaymath}
    \begin{array}{c}
      \infer[\mf{unitR}]{\ldbc\ldbc s(S) \mid \GG \rdbc \mid \GD \rdbc \vdT A \ot B}{
        \infer[\unitl^{-1}]{\ldbc\ldbc s(S) \mid \GG \rdbc , {-} \mid \GD \rdbc \vdT A \ot B}{
          \infer[\mf{assoc}^*]{\ldbc\ldbc s(S) \mid \GG \rdbc , \I \mid \GD \rdbc \vdT A \ot B}{
            \infer[\tr]{\ldbc s(S) \mid \GG \rdbc, \ldbc \I \mid \GD \rdbc \vdT A \ot B}{
              \deduce{\ldbc s(S) \mid \GG \rdbc \vdT A}{\mf{G2T} \ f'}
              &
              \deduce{\ldbc \I \mid \GD \rdbc \vdT B}{\mf{G2T} \ f''}
            }
          }
        }
      }
    \end{array}
  \end{displaymath}
  where $\mf{assoc}^*$ means multiple applications of $\mf{assoc}$.
  The case of $\lleft$ is similar.

  For $\mf{T2G}$, the proof relies on Lemma \ref{lem:subst:T2G} heavily.
  For example, when $f = \mf{unitR} \ g$, where we have $g: T[U,{-}] \vdT C$.
  By inductive hypothesis, we have $\mf{T2G} \ g: T[U^*\ot \I]^* \mid \quad \vdG C$.
  With Lemma \ref{lem:subst:T2G}, we construct the desired derivation as follows:
  \begin{displaymath}
    \begin{array}{c}
      \infer[\mf{scut}]{T[U^*]^* \mid \quad \vdG C}{
        \infer[\text{Lemma \ref{lem:subst:T2G}}]{T[U^*]^* \mid \quad \vdG T[U^*\ot \I]^*}{
          \infer[\tr]{U^* \mid \quad \vdG U^* \ot \I}{
            \infer[\ax]{U^* \mid \quad \vdG U^*}{}
            &
            \infer[\unitr]{{-} \mid \quad \vdG \I}{}
          }
        }
        &
        \deduce{T[U^*\ot \I]^* \mid \quad \vdG C}{\mf{T2G} \ g}
      }
    \end{array}
  \end{displaymath}
  The other cases are similar.
\end{proof}
\section{Skew Categories}\label{sec:skew:categories}
In this section, we present the definitions of left (right) skew monoidal closed categories, skew monoidal bi-closed categories, and various terms that will be used in the following section for discussion.
\begin{defn}\label{def:left:skewcat}
A \emph{left skew monoidal closed category} $\mathbb{C}$ is a category with a unit object $\I$ and two functors $\ot : \mathbb{C} \times \mathbb{C} \rightarrow \mathbb{C}$ and $\lolli : \mathbb{C}^{\mathsf{op}} \times \mathbb{C} \rightarrow \mathbb{C}$ forming an adjunction ${-} \ot B \dashv B \lolli {-}$ for all $B$,
and three natural transformations $\lambda$, $\rho$, $\alpha$ typed
$\lambda_A : \I \ot A \to A$, $\rho_A : A \to A \ot \I$ and $\alpha_{A,B,C} : (A \ot B) \ot C \to A \ot (B \ot C)$,
satisfying coherence conditions on morphisms due to Mac Lane \cite{maclane1963natural}:
	\begin{center}
	\begin{tikzcd}
		& {\I \ot \I} \\[-.2cm]
		\I && \I
		\arrow["{\rho_{\I}}", from=2-1, to=1-2]
		\arrow["{\lambda_{\I}}", from=1-2, to=2-3]
		\arrow[Rightarrow, no head, from=2-1, to=2-3]
	\end{tikzcd}
	\qquad
	\begin{tikzcd}
		{(A \ot \I) \ot B} & {A \ot (\I \ot B)} \\[-.3cm]
		{A \ot B} & {A \ot B}
		\arrow[Rightarrow, no head, from=2-1, to=2-2]
		\arrow["{\rho_A \ot B}", from=2-1, to=1-1]
		\arrow["{A \ot \lambda_{B}}", from=1-2, to=2-2]
		\arrow["{\alpha_{A , \I , B}}", from=1-1, to=1-2]
	\end{tikzcd}
	
	\begin{tikzcd}
		{(\I \ot A ) \ot B} && {\I \ot (A \ot B)} \\[-.3cm]
		& {A \ot B}
		\arrow["{\alpha_{\I , A ,B}}", from=1-1, to=1-3]
		\arrow["{\lambda_{A \ot B}}", from=1-3, to=2-2]
		\arrow["{\lambda_{A} \ot B}"', from=1-1, to=2-2]
	\end{tikzcd}
	\qquad
	\begin{tikzcd}
		{(A \ot B) \ot \I} && {A \ot (B \ot \I)} \\[-.3cm]
		& {A \ot B}
		\arrow["{\alpha_{A , B, \I}}", from=1-1, to=1-3]
		\arrow["{A \ot \rho_B}"', from=2-2, to=1-3]
		\arrow["{\rho_{A \ot B}}", from=2-2, to=1-1]
	\end{tikzcd}
	
	\begin{tikzcd}
		{(A\ot (B\ot C)) \ot D} && {A \ot ((B \ot C) \ot D)} \\[-.2cm]
		{((A \ot B)\ot C) \ot D} & {(A \ot B) \ot (C \ot D)} & {A \ot (B \ot (C \ot D))}
		\arrow["{\alpha_{A , B\ot C , D}}", from=1-1, to=1-3]
		\arrow["{A \ot \alpha_{B , C ,D}}", from=1-3, to=2-3]
		\arrow["{\alpha_{A ,B ,C\ot D}}"', from=2-2, to=2-3]
		\arrow["{\alpha_{A \ot B , C , D}}"', from=2-1, to=2-2]
		\arrow["{\alpha_{A , B ,C} \ot D}", from=2-1, to=1-1]
	\end{tikzcd}
	\end{center}
	\end{defn}
Left skew monoidal closed category has other equivalent characterizations \cite{street:skew-closed:2013,uustalu:eilenberg-kelly:2020}, because natural transformations $(\lambda, \rho, \alpha)$ are in bijective correspondence with tuples of (extra)natural transformations $(j, i, L)$ typed $j_A : \I \to A \lolli A$, $i_A : \I \lolli A \to A$, and $L_{A,B,C} : B \lolli C \to (A \lolli B) \lolli (A \lolli C)$.
In particular, in a left skew \emph{non-monoidal} closed category, $(\lambda, \rho, \alpha)$ are not available and one has to work with $(j, i, L)$ and corresponding equations.

\begin{defn}\label{def:right:skewcat}
A \emph{right skew monoidal closed category} $(\mathbb{C}, \I, \ot, \lolli)$ is defined with the same objects and adjoint functors as a in left skew monoidal closed category but three natural transformations $\lambda^{\mf{R}}$, $\rho^{\mf{R}}$, $\alpha^{\mf{R}}$ are typed
$\lambda^{\mf{R}}_A : A \to \I \ot A$, $\rho^{\mf{R}}_A : A \ot \I \to A$ and $\alpha^{\mf{R}}_{A,B,C} : A \ot (B \ot C) \to (A \ot B) \ot C$.
The equations on morphisms are analogous but modified to fit the definition.
\end{defn}
Similar to left skew monoidal closed categories, natural transformations $(\lambda^{\mf{R}}, \rho^{\mf{R}}, \alpha^{\mf{R}})$ are in bijective correspondence with tuples ($j^{\mf{R}}, i^{\mf{R}}, L^{\mf{R}}$) typed $j^{\mf{R}}_{A, B} : \mbb{C} (\I , A \lolli B) \to \mbb{C} (A , B)$, $i^{\mf{R}}_{A} : A \to \I \lolli A$, and $L^{\mf{R}}_{A,B,C,D} : \mbb{C} (A , B \lolli (C \lolli D)) \to \int^{X} X. \mbb{C} (A , X \lolli D) \times \mbb{C} (B , C \lolli X)$, where $\int^{X}$ is a coend, cf. \cite[Section 4]{uustalu:eilenberg-kelly:2020}, and $\mbb{C} (A, B)$ means the set of morphisms from $A$ to $B$.
In parts of the next sections, where we only work with thin categories (for any two objects $A$ and $B$, $\mbb{C} (A , B)$ is either empty or a singleton set), it is safe to replace $\int^{X}$ with an existential quantifier.
\\
In the rest of the paper, we usually omit subscripts of natural transformations.
\begin{defn}\label{def:normal}
  A left skew monoidal closed category is
  \begin{itemize}
    \item[--] \emph{associative normal} if $\alpha$ is a natural isomorphism;
    \item[--] \emph{left unital normal} if $\lambda$ is a natural isomorphism;
    \item[--] \emph{right unital normal} if $\rho$ is a natural isomorphism.
  \end{itemize}
  The $(j, i, L)$ version is similar. The case of right skew monoidal closed categories is analogous.
\end{defn}

\begin{defn}\label{def:SkewBiC}
  A category $(\mbb{C}, \I , \otl, \llolli, \otr, \rlolli)$ is skew monoidal bi-closed (\SkBiC) if there exists a natural isomorphism $\gamma : A \otl B \to B \otr A$, $(\mbb{C}, \I , \otl, \llolli)$ is left skew monoidal closed such that right skew structural rules are dictated by the left skew ones via $\gamma$.
  \\
  This definition combines concepts from skew bi-monoidal and bi-closed categories as introduced in \cite{uustalu:eilenberg-kelly:2020}.
\end{defn}
\begin{example}
  $\lambda^{\mf{R}}$ is defined as $\gamma \circ \rho$, diagrammatically:
\[\begin{tikzcd}[ampersand replacement=\&]
	A \&\& {\I \otr A} \\
	\\
	A \&\& {A \otl \I}
	\arrow["{\lambda^{\mf{R}}}", from=1-1, to=1-3]
	\arrow[Rightarrow, no head, from=3-1, to=1-1]
	\arrow["\rho"', from=3-1, to=3-3]
	\arrow["\gamma"', from=3-3, to=1-3]
\end{tikzcd}\]
\end{example}
In contrast to the categorical model of associative Lambek calculus, the monoidal bi-closed category, we do not have both left ($\lo$) and right residuation ($\ol$), but instead have two right residuations corresponding to different tensor products.
However, with the natural isomorphism $\gamma$, and selecting a specific tensor, we can simulate both left and right residuations.

In the remainder of the paper, we will develop axiomatic and sequent calculi for \SkBiC~and explore its relational semantics.

\section{Calculi for \SkBiC}\label{sec:calculi:skbic}
By defining new formulae and adding rules in \LSkNL, we can have an axiomatic calculus \SkBiCA, where formulae ($\mf{Fma}$) are inductively generated by the grammar $A,B::= X \mid \I \mid A \otl B \mid A \llolli B \mid A \otr B \mid A \rlolli B$. $X$ and $\I$ adhere to the definitions provided in Section \ref{sec:syntax}, and $\otl$ and $\llolli$ ($\otr$ and $\rlolli$) represent left (right) skew multiplicative conjunction and implication, respectively.
\\
Derivations in \SkBiCA~are inductively generated by following rules: 
\begin{equation*}\label{eq:seqcalc:biskmc:Lam}
  \small\begin{array}{c}
        \infer[\id]{A \vdL A}{}
        \quad
        \infer[\mathsf{comp}]{A \vdL C}{
          A \vdL B
          &
          B \vdL C
        }
        \\[5pt]
      \infer[\otl]{A \otl B \vdL C \otl D}{
        A \vdL C
        &
        B \vdL D
      }
      \quad
      \infer[\llolli]{A \llolli B \vdL C \llolli D}{
        C \vdL A
        &
        B \vdL D
      }
      \quad
      \infer[\rlolli]{A \rlolli B \vdL C \rlolli D}{
        C \vdL A
        &
        B \vdL D
      }
      \\[5pt]
      \infer[\lambda]{\I \otl A \vdL A}{}
      \quad
      \infer[\rho]{A \vdL A \otl \I}{}
      \quad
      \infer[\alpha]{(A \otl B) \otl C \vdL A \otl (B \otl C)}{}
      \\[5pt]
      \infer[\Gg]{A \otl B \vdL B \otr A}{}
      \quad
      \infer[\Gg^{-1}]{A \otr B \vdL B \otl A}{}
      \quad
      \infer=[\pi]{A \vdL B \llolli C}{A \otl B \vdL C}
      \quad
      \infer=[\pi^{\mf{R}}]{A \vdL B \rlolli C}{A \otr B \vdL C}
  \end{array}
\end{equation*}
For any $f : A \vdL B$ and $g : C \vdL D$, we define $f \otr g$ as $\gamma \circ (g \otl f) \circ \gamma^{-1}$.
$\lambda^{\mf{R}}$, $\rho^{\mf{R}}$, and $\alpha^{\mf{R}}$ are also derivable.
\\
Similar to the constructions in \cite{uustalu:sequent:2021,uustalu:proof:nodate,uustalu:deductive:nodate,veltri:coherence:2021,UVW:protsn}, \SkBiCA~generates the free \SkBiC~(\FSkBiC) over a set $\mf{At}$ in the following way:
\begin{itemize}
  \item[--] Objects of \FSkBiC~are formulae ($\mf{Fma})$.
  \item[--] Morphisms between formulae $A$ and $B$ are derivations of sequents $A \vdL B$ and identified up to the congruence relation $\doteq$:  
  \begin{equation*}\label{eq:doteq}
\arraycolsep=20pt
\scriptsize\begin{array}{lc}
\text{(category laws)} &
\id \circ f \doteq f
\qquad
f \doteq f \circ \id
\qquad
(f \circ g) \circ h \doteq f \circ (g \circ h)
\\[5pt]
\text{($\otl$ functorial)} &
\id \otl \id \doteq \id
\qquad 
(h \circ f) \otl (k \circ g) \doteq h \otl k \circ f \otl g
\\[5pt]
\text{($\llolli$ functorial)} &
\id \llolli \id \doteq \id
\qquad 
(f \circ h) \llolli (k \circ g) \doteq h \llolli k \circ f \llolli g
\\[5pt]
\text{($\rlolli$ functorial)} &
\id \rlolli \id \doteq \id
\qquad 
(f \circ h) \rlolli (k \circ g) \doteq h \rlolli k \circ f \rlolli g
\\[5pt]
&
\lambda \circ \id \otl f \doteq f \circ \lambda
\\
\text{($\lambda,\rho,\alpha$ nat. trans.)} &
\rho \circ f \doteq f \otl \id \circ \rho
\\
&
\alpha \circ (f \otl g) \otl h \doteq f \otl (g \otl h) \circ \alpha
\\[5pt]
&
\lambda \circ \rho \doteq \id
\qquad
\id \doteq \id \otl \lambda \circ \alpha \circ \rho \otl \id
\\
(\text{Mac Lane axioms}) &
\lambda \circ \alpha \doteq \lambda \otl \id
\qquad
\alpha \circ \rho \doteq \id \otl \rho
\\
&
\alpha \circ \alpha \doteq \id \otl \alpha \circ \alpha \circ \alpha \otl \id 
\\[5pt]
(\text{$\gamma$ isomorphism}) &
\gamma \circ \gamma^{-1} \doteq \id
\qquad
\gamma^{-1} \circ \gamma \doteq \id
\\[5pt]
&
\pi f \circ g \doteq \pi (f \circ (g \otl \id))
\qquad
\pi (f \circ g) \doteq (\id \llolli f) \circ \pi g
\\[1.5pt]
(\text{$\pi^{(\mf{R})}$ nat. trans.})
&
\pi (\id \otl f) \doteq (g \llolli \id) \circ \pi \id
\qquad
\pi^{\mf{R}} (\id \otr f) \doteq (g \rlolli \id) \circ \pi^{\mf{R}} \id
\\[1.5pt]
&
\pi^{\mf{R}} f \circ g \doteq \pi^{\mf{R}} (f \circ (g \otr \id))
\qquad
\pi^{\mf{R}} (f \circ g) \doteq (\id \rlolli f) \circ \pi^{\mf{R}} g
\\[5pt]
(\text{$\pi^{(\mf{R})}$ isomorphism})
&
\pi (\pi^{-1} f) \doteq f
\qquad
\pi^{-1} (\pi f) \doteq f
\qquad
\pi^{\mf{R}} (\pi^{\mf{R}-1} f) \doteq f
\qquad
\pi^{\mf{R}-1} (\pi^{\mf{R}} f) \doteq f
\end{array}
\end{equation*}
Notice that by the definition of $f \otr g$ and $\gamma$ being an isomorphism, $\gamma$ and $\gamma^{-1}$ are natural transformations.
For example, $\gamma \circ f \otl g \doteq \gamma \circ f \otl g \circ \id \doteq \gamma \circ f \otl g \circ \gamma^{-1} \circ \gamma = g \otr f \circ \gamma$.
Similarly, naturality of $(\lambda^{\mf{R}}, \rho^{\mf{R}}, \alpha^{R})$ and corresponding Mac Lane axioms hold as well.
\end{itemize}
Given a skew monoidal bi-closed category $\mbb{D} $ with function $G: \mf{At} \to \mbb{D}$, we can define functions $\overline{G}_0 : \mf{Fma} \to \mbb{D}_0$ ($\mbb{D}_0$ is the collection of objects in $\mbb{D}$) and $\overline{G}_1 : \mf{FSkMBiC (At)} (A , B) \to \mbb{D} (\overline{G}_0 (A) , \overline{G}_0 (B))$ by induction on complexity of formulae and height of derivations respectively.
This construction uniquely specifies a strict skew monoidal bi-closed functor $\overline{G} : \mf{FSkMBiC} \to \mbb{D}$ satisfying $\overline{G} (X) = G(X)$.

However, it remains unclear how to construct a sequent calculus {\`a} la Girard for \SkBiC.
A simpler scenario to consider is the sequent calculus for right skew monoidal closed categories.
In this context, recalling Definition \ref{def:right:skewcat}, where natural transformations are in an opposite direction compared to left skew monoidal closed categories.
One approach is to propose a dual sequent calculus to \LSkG.
Here, sequents would be of the form $\GG \mid S \vdG A$, indicating a reversal of stoup and context, with all left rules applicable solely to the stoup.
We should think of the antecedents as trees associating to the right, structured as $(A_n, (\dots, (A_1, A_0)) \dots)$.
Nevertheless, $\rlolli$, by definition, is again a right residuation, implying that $\rlleft$ and $\rlright$ should resemble those in \LSkG.
This requirement then necessitates contexts to appear on the right-hand side of the stoup.

Fortunately, we can develop a sequent calculus, denoted as \SkBiCT, which is inspired by \LSkT~to characterize \SkBiC~categories.
Specifically, \SkBiCT~is an instantiation of Moortgat's multimodal Lambek calculus \cite{moortgat:multimodl:1996} with unit, semi-unital, and semi-associative structural rules.

Trees in \SkBiCT~are inductively defined by the grammar $T ::= \mf{Fma} \mid {-}\mid (T, T)\mid(T;T)$.
What we have defined are trees with two different ways of linking nodes: through the use of commas and semicolons, corresponding to $\otl$ and $\otr$, respectively.
Contexts and substitution are defined analogously to those of \LSkT.
Sequents are in the form $T \vdT A$ analogous to those in Section \ref{sec:syntax}.
\\
Derivations in \SkBiCT~are generated recursively by following rules:
\begin{displaymath}
  \footnotesize\begin{array}{lc}
    &
    \infer[\ax]{A \vdT A}{}
    \quad
    \infer[\unitr]{{-} \vdT \I}{}
    \quad
    \infer[\unitl]{T[\I] \vdT C}{T [{-}] \vdT C}
    \\[5pt]
    (\text{logical rules})
    &
    \textcolor{red}{
    \infer[\tll]{T [A \ot^{\mf{L}} B] \vdT C}{T [A , B] \vdT C}
    \quad
    \infer[\trl]{T , U \vdT A \ot^{\mf{L}} B}{
      T \vdT A
      &
      U \vdT B
    }
    }
    \quad
    \textcolor{blue}{
    \infer[\tlr]{T [A \ot^{\mf{R}} B] \vdT C}{T [A ; B] \vdT C}
    \quad
    \infer[\trr]{T ; U \vdT A \ot^{\mf{R}} B}{
      T \vdT A
      &
      U \vdT B
    }
    }
    \\[5pt]

    &
    \textcolor{red}{
    \infer[\llleft]{T[A \llolli B , U] \vdT C}{
      U \vdT A
      &
      T[B] \vdT C
    }
    \quad
    \infer[\llright]{T \vdT A \llolli B}{T , A \vdT B}
    }
    \quad
    \textcolor{blue}{
      \infer[\rlleft]{T[A \rlolli B ; U] \vdT C}{
      U \vdT A
      &
      T[B] \vdT C
    }
    \quad 
    \infer[\rlright]{T \vdT A \rlolli B}{T ; A \vdT B}
    }
    \\[5pt]
    (\text{structural rules})
    &
    \textcolor{red}{
    \infer[\assl]{T [(U_0 , U_1) , U_2] \vdT C}{T [U_0 , (U_1 , U_2)] \vdT C}
    }
    \quad
    \infer=[\comm]{T[U_1 ; U_0] \vdT C}{T [U_0 , U_1] \vdT C}
    \quad
    \textcolor{blue}{
    \infer[\assr]{T [U_0 ; (U_1 ; U_2)] \vdT C}{T [(U_0 ; U_1) ; U_2] \vdT C}
    }
    \\[5pt]
    
    &
    \textcolor{red}{
    \infer[\mf{unitL^{L}}]{T[{-},U] \vdT C}{T[U] \vdT C}
    \quad
    \infer[\mf{unitR^{L}}]{T[U] \vdT C}{T[U,{-}] \vdT C}
    }
    \quad
    \textcolor{blue}{
    \infer[\mf{unitL^{R}}]{T[U;{-}] \vdT C}{T[U] \vdT C}
    \quad
    \infer[\mf{unitR^{R}}]{T[U] \vdT C}{T[{-};U] \vdT C}
    }
  \end{array}
\end{displaymath}
We can think of these rules as originating from two separate calculi: \LSkT~(the red part with $\ax,\unitr$, and $\unitl$) and another for right skew monoidal closed categories (\RSkT, the blue part with $\ax,\unitr$, and $\unitl$), linked by $\comm$, in other words, we can mimic all the blue rules in the style of \LSkT~(only commas appear in antecedents) and vice versa. 
For example, we can express $\tlr$, $\trr$ and $\rlleft$ in the style of \LSkT: 
\begin{displaymath}
  \small\begin{array}{c}
    \begin{array}{c}
      \infer[\tlr']{T [B \ot^{\mf{R}} A] \vdT C}{T [A , B] \vdT C}
    \end{array}
    =
    \begin{array}{c}
      \infer[\tlr]{T [B \ot^{\mf{R}} A] \vdT C}{
        \infer[\comm]{T [B ; A] \vdT C}{T[A , B] \vdT C}
      }
    \end{array}
    \quad
    \begin{array}{c}
      \infer[\trr']{U, T \vdT A \otr B}{
        T \vdT A
        &
        U \vdT B
      }
    \end{array}
    =
    \begin{array}{c}
      \infer[\comm]{U , T \vdT A \otr B}{
        \infer[\tlr]{T ; U \vdT A \otr B}{
          T \vdT A
          &
          U \vdT B
        }
      }
    \end{array}
    \\
    \begin{array}{c}
      \infer[\rlleft']{T[U , A \rlolli B] \vdT C}{
      U \vdT A
      &
      T[B] \vdT C
    }
    \end{array}
    \quad
    =
    \quad
    \begin{array}{c}
      \infer[\ot \mf{comm}]{T[U , A \rlolli B] \vdT C}{
        \infer[\rlleft]{T[A \rlolli B ; U] \vdT C}{
          \deduce{U \vdT A}{}
          &
          \deduce{T[B] \vdT C}{}
        }
      }
    \end{array}
  \end{array}
\end{displaymath}

\begin{theorem}
  Similar to \LSkT, $\mf{cut}$ is admissible in \SkBiCT.
\begin{displaymath}
  \begin{array}{c}
    \infer[\mf{cut}]{T[U] \vdT C}{
      U \vdT A
      &
      T[A] \vdT C
    }
  \end{array}
\end{displaymath}
\end{theorem}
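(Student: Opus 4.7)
The plan is to mirror the proof of Theorem \ref{thm:cut:adm:LSkT}: proceed by a primary induction on the structure of the left premise $f$ of $\mf{cut}$, with a subinduction on the structure of $g$ or on the complexity of the cut formula $A$ when needed. Since \SkBiCT~arises essentially by duplicating the rules of \LSkT---red connectives governed by comma-contexts, blue connectives governed by semicolon-contexts---and linking them with the rule $\comm$, every argument from the proof of Theorem \ref{thm:cut:adm:LSkT} adapts directly.

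First I would dispatch the principal cases, in which $f$ concludes with a right rule ($\trl$, $\trr$, $\llright$, $\rlright$, or $\unitr$) that introduces the top connective of the cut formula. I inspect $g$ and reduce whenever $g$ applies the matching left rule at that formula occurrence: the $\trl/\tll$ reduction proceeds exactly as in non-associative Lambek calculus; the $\trr/\tlr$ reduction is identical but with semicolons in place of commas; the $\llright/\llleft$ and $\rlright/\rlleft$ reductions replay the $\lright/\lleft$ reduction already handled in the proof of Theorem \ref{thm:cut:adm:LSkT}. In each of these, the cut formula strictly decreases in complexity, licensing the subinduction on $A$. The case $f = \unitr$, $g = \unitl$ acting on a cut $\I$ uses the same substitution trick as in the proof of Theorem \ref{thm:cut:adm:LSkT}, where the introduced $\I$ in the conclusion is rewritten to ${-}$. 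If $g$ does not act on the cut formula, $\mf{cut}$ is permuted upward into $g$, invoking the subinduction hypothesis on $g$.

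All remaining cases are commutative. When $f$ concludes with a left logical rule ($\unitl$, $\tll$, $\tlr$, $\llleft$, $\rlleft$) or a structural rule ($\assl$, $\assr$, $\comm$, $\mf{unitL^{L}}$, $\mf{unitR^{L}}$, $\mf{unitL^{R}}$, $\mf{unitR^{R}}$), the rule acts on a region of the tree disjoint from the cut-position in $T[A]$, so $\mf{cut}$ permutes past it to a smaller cut on the premise of $f$. The $\comm$ case is particularly easy, since it merely relabels a node from a comma to a semicolon (or vice versa) without touching the cut formula.

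The main obstacle is bookkeeping rather than ideas: the number of cases roughly doubles compared to \LSkT, and some care is required when the cut formula lies inside a subtree whose outer shape is modified by a structural rule of $f$ (such as $\assr$ or $\mf{unitL^{R}}$), because the position of the hole must be tracked across the rewriting of comma- and semicolon-nodes. However, since commas and semicolons are treated uniformly up to their label and do not interact with each other, no new combinatorial phenomenon arises beyond what the proof of Theorem \ref{thm:cut:adm:LSkT} already handles.
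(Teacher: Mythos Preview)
Your proposal is correct and follows essentially the same route as the paper: the paper simply states that the argument proceeds as in Theorem~\ref{thm:cut:adm:LSkT}, noting that all the genuinely new rules ($\comm$ and the blue structural rules) are one-premise left rules through which $\mf{cut}$ permutes upward. Your writeup is a more detailed unfolding of exactly this strategy; the only slight slip is in your final paragraph, where you speak of the cut formula lying inside a subtree reshaped by a structural rule of $f$---that situation cannot arise, since structural rules of $f$ act on $U$, not on $T[A]$; presumably you meant the subinduction step where a structural rule of $g$ reshapes $T[A]$ around the cut position.
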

\begin{proof}
  The proof proceeds similarly to that of Theorem \ref{thm:cut:adm:LSkT}. 
  In particular, the new rules ($\ot\mf{comm}$ and the structural rules in blue) are all one-premise left rules, allowing us to permute $\mf{cut}$ upwards.
\end{proof}
The equivalence between \SkBiCA~and \SkBiCT~can be proved by induction on height of derivations with a lemma similar to Lemma \ref{lem:subst:T2G} and the following admissible rules:
\begin{displaymath}
  \begin{array}{c}
    \infer[\tll^{-1}]{T[A , B] \vdL C}{T[A \otl B] \vdL C}
    \quad
    \infer[\llright^{-1}]{T , A \vdL B}{T \vdL A \llolli B}
    \quad
    \infer[\tlr^{-1}]{T[A ; B] \vdL C}{T[A \otr B] \vdL C}
    \quad
    \infer[\rlright^{-1}]{T ; A \vdL B}{T \vdL A \rlolli B}
  \end{array}
\end{displaymath}
\begin{theorem}\label{thm:equiv:SkBiC}
\SkBiCT~is equivalent to \SkBiCA, meaning that the following two statements are true:
  \begin{itemize}
    \item For any derivation $f:A \vdL C$, there exists a derivation $\mf{A2G} f : A \vdT C$.
    \item For any derivation $f:T \vdT C$, there exists a derivation $\mf{G2A} f : T^{\#} \vdL C$, where $T^{\#}$ transforms a tree into a formula by replacing commas with $\otl$ and semicolons with $\otr$, and ${-}$ with $\I$, respectively.
  \end{itemize}
\end{theorem}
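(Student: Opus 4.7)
The plan mirrors the strategy of Theorem~\ref{thm:equiv:LSkGLSKT}: I will prove $\mf{A2G}$ and $\mf{G2A}$ simultaneously by induction on the height of the input derivation, using cut admissibility in \SkBiCT~together with a \SkBiCA-side context-lifting lemma analogous to Lemma~\ref{lem:subst:T2G}. The lifting lemma reads: for every context $T[\cdot]$ and derivation $f : A \vdL B$, there is a derivation $T[A]^{\#} \vdL T[B]^{\#}$. I would prove it by induction on $T[\cdot]$, using the functorial rules $\otl$ and $\otr$ at comma and semicolon nodes respectively (the $\otr$ case being available as a derived operation via $\gamma$, $\gamma^{-1}$ and $\otl$, as noted just after the axiomatic presentation of \SkBiCA).

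For $\mf{A2G}$, each axiom of \SkBiCA~translates directly in \SkBiCT: $\id$ is $\ax$; $\lambda$ stacks $\tll$, $\unitl$ and $\mf{unitL^{L}}$ over $\ax$; $\rho$ stacks $\mf{unitR^{L}}$ and $\trl$ over $\ax$ and $\unitr$; $\alpha$ uses two $\tll$ steps, then $\assl$, then nested $\trl$; $\gamma$ uses $\tll$, $\comm$ and $\trr$, and $\gamma^{-1}$ is symmetric via $\tlr$, $\comm$ and $\trl$. The rule $\comp$ is discharged by the cut rule of \SkBiCT, and the functorial rules $\otl, \llolli, \rlolli$ combine the inductive hypotheses with $\trl, \llright, \rlright$ and $\mf{cut}$. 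The bidirectional adjunctions $\pi$ and $\pi^{\mf{R}}$ are handled by invoking the admissible inverses $\tll^{-1}, \llright^{-1}$ and $\tlr^{-1}, \rlright^{-1}$ respectively, reshuffling a derivation of $A \otl B \vdT C$ into $A \vdT B \llolli C$ (and symmetrically for the right-skew side).

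For $\mf{G2A}$, I proceed by case analysis on the last rule of the input derivation. The rules $\tll, \tlr$ and $\unitl$ are silent at the \SkBiCA~level, because $T^{\#}$ identifies commas, semicolons and $-$ with $\otl, \otr$ and $\I$; the inductive hypothesis directly yields the goal. The right rules $\trl, \trr$ are dispatched by $\otl$ and $\otr$. The left implication rules $\llleft$ and $\rlleft$ first build $(A \llolli B) \otl U^{\#} \vdL B$ (resp.\ $(A \rlolli B) \otr U^{\#} \vdL B$) from the inductive hypothesis by combining $\llolli$ with $\pi$ (resp.\ $\rlolli$ with $\pi^{\mf{R}}$) on an identity arrow, lift this morphism through the surrounding context via the lifting lemma, and finally compose with the other inductive hypothesis using $\comp$. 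Each structural rule is absorbed by lifting a matching \SkBiCA~axiom at the appropriate subtree position and composing: $\alpha$ for $\assl$, $\alpha^{\mf{R}}$ for $\assr$, $\gamma$ and $\gamma^{-1}$ for the two directions of $\comm$, and $\lambda, \rho, \lambda^{\mf{R}}, \rho^{\mf{R}}$ for the four unit structural rules.

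I expect the main obstacle to be the bookkeeping around the two tree constructors: the lifting lemma must treat commas and semicolons uniformly, yet since $\otr$ is defined from $\otl$ via $\gamma$ rather than being a primitive functor of \SkBiCA, some care is needed to ensure that the derivations produced for semicolon subtrees compose correctly with the structural axioms $\gamma, \gamma^{-1}$ used to discharge $\comm$ (and with $\alpha^{\mf{R}}$, which itself unfolds through $\gamma$). Once the lifting lemma is established, every remaining case reduces to a single axiom plus a lifting step plus one application of $\comp$ or $\mf{cut}$, so the two inductions close cleanly.
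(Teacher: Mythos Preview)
Your proposal is correct and follows essentially the same route as the paper: induction on the height of derivations in both directions, a context-lifting lemma on the \SkBiCA~side analogous to Lemma~\ref{lem:subst:T2G}, and the admissible inverse rules $\tll^{-1}$, $\llright^{-1}$, $\tlr^{-1}$, $\rlright^{-1}$ to handle the adjunction rules $\pi$ and $\pi^{\mf{R}}$. The paper only sketches this, so your explicit case analysis (including the treatment of $\comm$ via $\gamma$, $\gamma^{-1}$ and of the right-skew structural rules via the derived $\lambda^{\mf{R}}, \rho^{\mf{R}}, \alpha^{\mf{R}}$) fills in exactly the expected details.
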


\section{Relational Semantics of \SkBiCA~and Application}\label{sec:algebraic:relational:model}

In this section, we present the relational semantics of \SkBiCA.
Furthermore, the relational semantics for \SkBiCA~is characterized modularly, allowing us to construct models for semi-substructural logics step by step by incorporating additional structural conditions into the frame.
The modularity allows us to provide an algebraic proof for the main theorems concerning the interdefinability of a series of skew categories as discussed in \cite{uustalu:eilenberg-kelly:2020}.

A preordered ternary frame with a special subset is $\langle W ,\leq ,\mbb{I}, \mbb{L}\rangle$, where $W$ is a set, $\leq$ is a preorder relation on $W$, $\mbb{I}$ is a downwards closed subset of $W$, and $\mbb{L}$ is an arbitrary ternary relation on $W$, where $\mbb{L}$ is upwards closed on the first two arguments and downwards closed on the last argument with respect to $\leq$.
\begin{defn}\label{eq:relational:monoids}
We list properties of ternary relations which we will focus on.
  \begin{equation*}
  \small\begin{array}{ll}
    \text{Left Skew Associativity (LSA)} & \forall a,b,c,d,x \in W, \mbb{L}abx \ \& \ \mbb{L}xcd \longrightarrow \exists y \in W \ \text{such that} \  \mbb{L}bcy \ \& \ \mbb{L}ayd.
    \\
    \text{Left Skew Left Unitality (LSLU)} & \forall a, b \in W, e \in \mbb{I}, \mbb{L}eab \longrightarrow b \leq a.
    \\
    \text{Left Skew Right Unitality (LSRU)} & \forall a \in W, \exists e \in \mbb{I} \ \text{such that} \  \mbb{L}aea.
    \\
    \text{Right Skew Associativity (RSA)} & \forall a,b,c,d,x \in W, \mbb{L}bcx \ \& \ \mbb{L}axd \longrightarrow \exists y \in W \ \text{such that} \  \mbb{L}aby \ \& \ \mbb{L}ycd.
    \\
    \text{Right Skew Left Unitality (RSLU)} & \forall a \in W, \exists e \in \mbb{I} \ \text{such that} \  \mbb{L}eaa.
    \\
    \text{Right Skew Right Unitality (RSRU)} & \forall a,b \in W, e \in \mbb{I}, \mbb{L}aeb \longrightarrow b \leq a.
  \end{array}
\end{equation*}
\end{defn}
Given another ternary relation $\mbb{R}$, we define
\begin{displaymath}
  \begin{array}{cc}
    \text{$\mbb{LR}$-reverse} & \forall a,b,c \in W, \mbb{L}abc \longleftrightarrow \mbb{R}bac.
  \end{array}
\end{displaymath} 
The associativity and unitality conditions are adapted from the theory of relational monoids \cite{rosenthal:relational:monoids:1997} and relational semantics for Lambek calculus \cite{dosen:1992}.

An \SkBiCA~frame is a quintuple $\langle W , \leq, \mbb{I} , \mbb{L} , \mbb{R}\rangle$, where $\mbb{LR}$-reverse is satisfied, $\mbb{L}$ satisfies LSA, LSLU, LSRU, and $\mbb{R}$ automatically satisfies RSA, RSLU, RSRU because of $\mbb{LR}$-reverse.

Unlike studies in \NL~e.g. \cite{dosen:1992,moortgat:multimodl:1996,moot:categorial:2012}, where two associativity conditions simultaneously hold for a relation or not, we explore two relations where one satisfies LSA and the other satisfies RSA.
Another distinction from the existing studies on semantics for \NL~with unit \cite{bulinska:2009} (or non-commutative linear logic \cite{abrusci:noncommutative:1990}) is that while $W$ is commonly assumed to be an unital groupoid (or monoid in the case of linear logic), here, we should consider that the unit behaves differently for different relations. 

We denote the set of downwards closed subsets of $W$ as $\mc{P}_{\darr}(W)$.
\begin{defn}\label{eq:valuation:frame}
A  function $v: \mf{Fma} \to \mc{P}_{\darr}(W)$ on a \SkBiCA~frame is a valuation if it satisfies:
  \begin{equation*}
  \small\begin{array}{ll}
    v(\I) & = \mbb{I}
    \\
    v(A \otl B) &= \{c : \exists a \in v(A) , b \in v(B) , \ \mbb{L}abc  \}
    \\
    v(A \llolli B) &= \{c : \forall a \in v(A), b \in W , \ \mbb{L}cab \Rightarrow b \in v(B) \}
    \\
    v(A \otr B) &= \{c : \exists a \in v(A) , b \in v(B) , \ \mbb{R}abc  \}
    \\
    v(A \rlolli B) &= \{c : \forall a \in v(A), b \in W , \ \mbb{R}cab \Rightarrow b \in v(B) \}
  \end{array}
\end{equation*}
\end{defn}
We define a \SkBiCA~model to be a \SkBiCA~frame with a valuation function, i.e. $\langle W , \leq , \mbb{I} , \mbb{L} , \mbb{R} , v \rangle$.
A sequent $A \vdL B$ is valid in a model $\langle W , \leq, \mbb{I} , \mbb{L} , \mbb{R} , v \rangle$ if $v(A) \subseteq v(B)$ and is valid in a frame if for any $v$ for that frame, $v(A) \subseteq v(B)$.
\begin{theorem}[Soundness]\label{thm:sound}
  If a sequent $A \vdL B$ is provable in \SkBiCA~then it is valid in any \SkBiCA~model.
\end{theorem}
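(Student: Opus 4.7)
The plan is to prove soundness by structural induction on the derivation of $A \vdL B$ in \SkBiCA, showing that for each rule, validity of the premises in a fixed model $\langle W, \leq, \mbb{I}, \mbb{L}, \mbb{R}, v\rangle$ transfers to the conclusion. As a preliminary step I would verify that the inductive definition of $v$ in Definition~\ref{eq:valuation:frame} indeed produces values in $\mc{P}_{\darr}(W)$: this relies on the downwards closure of $\mbb{I}$, the downwards closure of $\mbb{L}$ and $\mbb{R}$ in the third argument, and their upwards closure in the first two arguments. Only after this well-definedness check do the subsequent clauses become meaningful.

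The rules $\id$ and $\mf{comp}$ correspond to reflexivity and transitivity of $\subseteq$ and cause no trouble. The functoriality rules for $\otl$, $\llolli$ and $\rlolli$ follow by direct unpacking of the semantic clauses; the $\llolli$ and $\rlolli$ cases require care because the first argument is contravariant, i.e.\ the hypothesis is $v(C) \subseteq v(A)$. The two residuation biconditionals $\pi$ and $\pi^{\mf{R}}$ unpack to exactly the adjoint correspondence: $v(A) \subseteq v(B \llolli C)$ is equivalent to the statement that for all $a \in v(A)$, $b \in v(B)$, $c \in W$, if $\mbb{L}abc$ then $c \in v(C)$, which in turn is equivalent to $v(A \otl B) \subseteq v(C)$. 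The analogous calculation with $\mbb{R}$ in place of $\mbb{L}$ handles $\pi^{\mf{R}}$.

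The substantive content is in matching the skew axioms to the corresponding frame conditions. For $\lambda$: given $c \in v(\I \otl A)$, pick witnesses $e \in \mbb{I}$ and $a \in v(A)$ with $\mbb{L}eac$; LSLU yields $c \leq a$, and downwards closure of $v(A)$ delivers $c \in v(A)$. For $\rho$: given $a \in v(A)$, LSRU supplies $e \in \mbb{I}$ with $\mbb{L}aea$, witnessing $a \in v(A \otl \I)$. For $\alpha$: given $d \in v((A \otl B) \otl C)$, unpack to obtain $a \in v(A)$, $b \in v(B)$, $c \in v(C)$ and an intermediate $x$ with $\mbb{L}abx$ and $\mbb{L}xcd$; LSA then produces $y$ with $\mbb{L}bcy$ and $\mbb{L}ayd$, exhibiting $d \in v(A \otl (B \otl C))$. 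The axioms $\gamma$ and $\gamma^{-1}$ are immediate from $\mbb{LR}$-reverse. The main obstacle is not conceptual but clerical: one must keep track of contravariance in the $\llolli$ and $\rlolli$ clauses and invoke the appropriate closure properties whenever a new subset of $W$ is constructed, so that every inductive hypothesis is applied to a genuinely downwards closed set and no quantifier alternation is accidentally swapped.
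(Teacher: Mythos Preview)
Your proposal is correct and follows essentially the same approach as the paper: structural induction on the \SkBiCA\ derivation, with the skew axioms $\lambda$, $\rho$, $\alpha$, $\gamma$, $\gamma^{-1}$ discharged via LSLU, LSRU, LSA, and $\mbb{LR}$-reverse exactly as you describe. The paper only spells out the cases $\lambda$, $\rho$, $\gamma$, $\gamma^{-1}$ (deferring $\alpha$, the congruence rules, and the residuation biconditionals to the cited literature), while you additionally sketch the well-definedness of $v$ and the $\pi$/$\pi^{\mf{R}}$ cases, but there is no substantive divergence.
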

\begin{proof}
  The proof is adapted from \cite{dosen:1992,moot:categorial:2012}, where the cases of $\alpha$ and $\alpha^{\mf{R}}$ have been discussed.
  Therefore, we only elaborate on new cases arising in \SkBiCA.
  \begin{itemize}
    \item[--] If the derivation is the axiom $\lambda: \I \otl A \vdL A$, then for any \SkBiCA~model $\langle W , \mbb{I} , \mbb{L} , \mbb{R} , v \rangle$ and any $a \in v(\I \otl A)$, there exist $e \in \mbb{I}$, $a' \in v(A)$, and $\mbb{L}ea'a$. By LSLU, we know that $a \leq a'$, and then $a \in v(A)$. 
    \item[--] If the derivation is the axiom $\rho: A \vdL A \otl \I$, then for any \SkBiCA~model $\langle W , \mbb{I} , \mbb{L} , \mbb{R} , v \rangle$ and any $a \in v(A)$, by LSRU, there exists $e \in \mbb{I}$ such that $\mbb{L}aea$, which means that $a \in v(A \otl \I)$.
    \item[--] If the derivation is the axiom $\gamma : A \otl B \vdL B \otr A$, then for any \SkBiCA~model $\langle W , \mbb{I} , \mbb{L} , \mbb{R} , v \rangle$ and any $c \in v(A \otl B)$, there exist $a \in v(A)$ and $b \in v(B)$ such that $\mbb{L}abc$. By $\mbb{LR}$-reverse, we have $\mbb{R}bac$, therefore $c \in v(B \otr A)$.
    \item[--] The case of $\gamma^{-1}$ is similar.  
  \end{itemize}
\end{proof}
\begin{defn}
  The canonical model of \SkBiCA~is $\langle W , \leq , \mbb{I} , \mbb{L}, \mbb{R}, v \rangle$ where
  \begin{itemize}
    \item[--] $W = \mf{Fma}$ and $A \leq B$ if and only if $A \vdL B$, 
    \item[--] $\mbb{I} = v(\I)$, 
    \item[--] $\mbb{L} ABC $ if and only if $C \vdL A \otl B$,
    \item[--] $\mbb{R} ABC$ if and only if $C \vdL A \otr B$, and
    \item[--] $v(A) = \{ B \mid B \vdL A \text{ is provable in \SkBiCA} \}$.
  \end{itemize}
\end{defn}
\begin{lemma}
  The canonical model is a \SkBiCA~model.
\end{lemma}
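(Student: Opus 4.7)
The plan is to verify each clause in the definition of a \SkBiCA~model. The preorder laws for $\leq$ are exactly $\id$ and $\mathsf{comp}$. Each $v(A)$, and in particular $\mbb{I} = v(\I)$, is downward closed by $\mathsf{comp}$. Upward closure of $\mbb{L}$ in its first argument amounts to: $C \vdL A \otl B$ together with $A \vdL A'$ yields $C \vdL A' \otl B$, which is $\otl$-functoriality plus $\mathsf{comp}$; downward closure in the last argument is direct $\mathsf{comp}$; the remaining closure conditions on $\mbb{L}$ and on $\mbb{R}$ are analogous (using $\otr$-functoriality in the latter case).

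Next I verify $\mbb{LR}$-reverse and the three semi-structural conditions on $\mbb{L}$. The biconditional $\mbb{L}ABC \iff \mbb{R}BAC$ unfolds to $C \vdL A \otl B \iff C \vdL B \otr A$, and each direction is given by post-composition with $\gamma$ or $\gamma^{-1}$. For LSA, given $x \vdL a \otl b$ and $d \vdL x \otl c$, the witness $y \defeq b \otl c$ works: $y \vdL b \otl c$ is $\id$, and $d \vdL x \otl c \vdL (a \otl b) \otl c \vdL a \otl (b \otl c) = a \otl y$ via $\otl$-monotonicity and $\alpha$. LSLU reduces to showing that if $e \vdL \I$, then $b \vdL e \otl a$ forces $b \vdL a$, which follows from $e \otl a \vdL \I \otl a \vdL a$ using $\lambda$. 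For LSRU, taking $e \defeq \I$ reduces the claim to $a \vdL a \otl \I$, which is $\rho$. The three corresponding axioms for $\mbb{R}$ are then automatic from $\mbb{LR}$-reverse.

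Finally I verify the five recursive clauses for $v$. The unit clause is definitional. For $v(A \otl B)$: right-to-left is $\otl$-functoriality plus $\mathsf{comp}$; left-to-right uses the witnesses $a \defeq A$, $b \defeq B$ with $\id$. The implication clause $v(A \llolli B)$ is the step that carries the real bookkeeping. For $(\supseteq)$, given $c$ in the right-hand set, instantiating the universal quantifiers at $a \defeq A$ and $b \defeq c \otl A$ (both by $\id$) yields $c \otl A \vdL B$, whence $c \vdL A \llolli B$ by $\pi$. For $(\subseteq)$, given $c \vdL A \llolli B$, $a \vdL A$, $b \vdL c \otl a$, I derive $c \otl a \vdL (A \llolli B) \otl A \vdL B$ using $\otl$-functoriality and the counit $\pi^{-1}(\id)$, then conclude $b \vdL B$ by $\mathsf{comp}$. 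The $\otr$ and $\rlolli$ clauses are handled symmetrically via $\otr$-functoriality and $\pi^{\mf{R}}$. No step is genuinely difficult; the only place that calls for care is the quantifier unfolding in the implication clauses, where one must choose the right instances and invoke the adjunction correctly.
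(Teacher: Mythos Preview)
Your proposal is correct and follows the same approach as the paper: verify the preorder, the closure properties of $\mbb{I}$, $\mbb{L}$, $\mbb{R}$ using $\id$, $\comp$, and $\otl/\otr$-functoriality, then check the structural conditions LSLU, LSRU (and LSA, $\mbb{LR}$-reverse) exactly as you do. The paper only spells out LSLU and LSRU and dismisses the valuation clauses with ``by definition''; your explicit treatment of the $\llolli$ clause via $\pi/\pi^{-1}$ is a welcome addition rather than a different method.
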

\begin{proof}
~
  \begin{itemize}
    \item[--] The set $(\mf{Fma} , \vdL)$ is a preorder because of the rules $\id$ and $\comp$, and the set $\mbb{I}$ is downwards closed because of $\comp$.
    The relations $\mbb{L}$ and $\mbb{R}$ are downwards closed on their last argument because of the rule $\comp$.
    They are upwards closed on their first two arguments due to the rules $\otl$ and $\otr$,  respectively.
    These facts ensure that $\langle \mf{Fma} , \vdL, \mbb{I} , \mbb{L}, \mbb{R}\rangle$ is a ternary frame.
    \item[--] We show two cases (LSRU and LSRU) of the proof that $\mbb{L} , \mbb{R}$ satisfy their corresponding conditions, while other cases are similar.
    \begin{itemize} 
    \item[(LSLU)] Given any two formulae $A$ and $B$, and $J \in \mbb{I}$ with $\mbb{L}JAB$, we have $J \vdL \I$, and $B \vdL J \otl A$, then we can construct $B \vdL A$ as follows:
    \begin{displaymath}
     \begin{array}{c}
        
                  \infer[\comp]{B \vdL A}{
                    \infer[\comp]{B \vdL \I \otl A}{
                      \deduce{B \vdL J \otl A}{}
                      &
                      \infer[\otl]{J \otl A \vdL \I \otl A}{
                        \deduce{J \vdL \I}{}
                        &
                        \infer[\id]{A \vdL A}{}
                      }
                    }
                    &
                    \infer[\lambda]{\I \otl A \vdL A}{}
        }
      \end{array}
    \end{displaymath}
    \item[(LSRU)] By the axiom $\rho$, for any formula $A$, we have $A \vdL A \otl \I$, i.e. $\mbb{L}AIA$.
  \end{itemize}
    \item[--] The valuation $v$ is downwards closed because of the rule $\comp$.
    The other conditions on connectives are satisfied by definition.
  \end{itemize}
  Therefore, $\langle \mf{Fma} , \vdL , \mbb{I} , \mbb{L}, \mbb{R}, v \rangle$ is a \SkBiCA~model.
\end{proof}
\begin{theorem}[Completeness]
  If $A \vdL B$ is valid in any \SkBiCA~model, then it is provable in \SkBiCA.
\end{theorem}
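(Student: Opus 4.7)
The plan is the classical canonical-model argument, which in our setting collapses to just a few lines thanks to the preceding lemma. Assuming $A \vdL B$ is valid in every \SkBiCA{} model, specializing validity to the canonical model yields $v(A) \subseteq v(B)$. Since $A \vdL A$ by $\id$, we have $A \in v(A)$, hence $A \in v(B)$, which by the canonical definition of $v$ is precisely the derivation $A \vdL B$ we seek.

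The actual content sits in a \emph{truth lemma} that the canonical model must satisfy to deserve its name: the direct recipe $v(A) = \{B : B \vdL A\}$ has to agree with the compositional clauses of Definition~\ref{eq:valuation:frame}. I would make this agreement explicit by induction on $A$. The atomic and unit cases are immediate, given the stipulation $\mbb{I} = v(\I)$ and the fact that $v$ is downwards closed by $\comp$. For a tensor $A_1 \otl A_2$, the equivalence $C \vdL A_1 \otl A_2$ iff there exist $A_1', A_2'$ with $A_i' \vdL A_i$ and $C \vdL A_1' \otl A_2'$ is witnessed one way by choosing $A_i' = A_i$ with $\id$, and the other way by functoriality of $\otl$ followed by $\comp$.

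For an implication $A_1 \llolli A_2$, the target equivalence reads: $C \vdL A_1 \llolli A_2$ iff for every $A_1'$ and $D$ with $A_1' \vdL A_1$ and $D \vdL C \otl A_1'$, one has $D \vdL A_2$. The forward direction uses $\pi^{-1}$ to obtain $C \otl A_1 \vdL A_2$, then functoriality (to pre-compose $A_1' \vdL A_1$ under $C \otl -$) and $\comp$ to chain $D \vdL C \otl A_1' \vdL C \otl A_1 \vdL A_2$; the backward direction specializes $A_1' = A_1$ and $D = C \otl A_1$ with both witnesses given by $\id$, and then applies $\pi$. The cases $\otr$ and $\rlolli$ are entirely symmetric, mediated by $\gamma, \gamma^{-1}$ together with $\pi^{\mf{R}}, \pi^{\mf{R}-1}$.

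I do not expect a significant obstacle. The only step needing attention is the implication clauses, where one must correctly interleave the adjunction transpositions $\pi, \pi^{-1}$ with the contravariant--covariant functoriality of $\llolli$ (resp.\ $\rlolli$); but all the needed identities are packaged in the congruence $\doteq$ defining \SkBiCA, so the induction goes through uniformly.
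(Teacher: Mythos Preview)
Your argument is correct and follows essentially the same route as the paper: specialize validity to the canonical model, use $\id$ to get $A \in v(A) \subseteq v(B)$, and read off $A \vdL B$ from the definition of $v$. Your additional truth-lemma discussion (that the direct definition $v(A) = \{B : B \vdL A\}$ satisfies the compositional clauses of Definition~\ref{eq:valuation:frame}) is exactly what the paper folds into the preceding lemma with the phrase ``the other conditions on connectives are satisfied by definition''; you have simply made explicit the short functoriality-plus-$\comp$ and $\pi/\pi^{-1}$ checks that the paper leaves implicit.
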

\begin{proof}
  If $A \vdL B$ is valid in any \SkBiCA~model, then it is valid in the canonical model, i.e. $v(A) \subseteq v(B)$ in the canonical model.
  From $A \vdL A$, by definition of $v$, we have $A \in v(A)$, and because $v(A) \subseteq v(B)$, we know that $A \in v(B)$, therefore $A \vdL B$.
\end{proof}
We show a correspondence between frame conditions and the validity of structural laws in frames.
\begin{restatable}{theorem}{main}\label{thm:main}
  For any ternary frame $\langle W , \leq, \mbb{I} , \mbb{L} , \mbb{R} \rangle$,
  \begin{displaymath}
    \small\begin{array}{ccccc}
      &  \mbb{LR} \text{-reverse holds} & \longleftrightarrow  & \gamma \ \text{and} \ \gamma^{-1} \text{valid} &
      
      \\
      \alpha^{(\mf{R})} \ \text{valid} \ & \longleftrightarrow & \text{LSA (RSA) holds} & \longleftrightarrow & L^{(\mf{R})} \ \text{valid}
      \\
      \lambda^{(\mf{R})} \ \text{valid} \ & \longleftrightarrow & \text{LSLU (RSLU) holds} & \longleftrightarrow & j^{(\mf{R})} \ \text{valid}
      \\
      \rho^{(\mf{R})} \ \text{valid} \ & \longleftrightarrow & \text{LSRU (RSRU) holds} & \longleftrightarrow & i^{(\mf{R})} \ \text{valid}
    \end{array}
  \end{displaymath}
\end{restatable}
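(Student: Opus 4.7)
The plan is to decompose the grid of equivalences into independent Kripke-style correspondence results, each proved by the same template: the direction from frame condition to axiom validity generalizes the soundness calculations in Theorem \ref{thm:sound}, while the converse uses test valuations built from principal downsets $\darr\!a = \{x \in W : x \leq a\}$, which lie in $\mc{P}_\darr(W)$ by transitivity of $\leq$ and hence are admissible as $v(X)$ for an atom $X$. I may also need a few bespoke downwards-closed sets built from $\mbb{L}$ or $\mbb{R}$ to handle the closed-form axioms.

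First I would prove $\mbb{LR}$-reverse $\iff \gamma, \gamma^{-1}$ valid. The forward direction is immediate since $\mbb{LR}$-reverse makes the valuation clauses for $v(A \otl B)$ and $v(B \otr A)$ literally coincide. For the converse, given $\mbb{L}abc$, taking $v(A) = \darr\!a$ and $v(B) = \darr\!b$ places $c \in v(A \otl B) \subseteq v(B \otr A)$, which unfolds to $\mbb{R}b'a'c$ for some $b' \leq b$ and $a' \leq a$; upwards closure of $\mbb{R}$ on its first two arguments then yields $\mbb{R}bac$, and $\gamma^{-1}$ is symmetric.

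Next I would prove the three middle biconditionals $\alpha \iff$ LSA, $\lambda \iff$ LSLU, $\rho \iff$ LSRU by the same recipe. For instance, given $\rho$ valid, taking $v(A) = \darr\!a$ puts $a \in v(A) \subseteq v(A \otl \I)$, producing $a' \leq a$ and $e \in \mbb{I}$ with $\mbb{L}a'ea$; upwards closure in the first argument delivers $\mbb{L}aea$. For $\alpha$ valid $\Rightarrow$ LSA, given $\mbb{L}abx$ and $\mbb{L}xcd$, the three principal downsets $\darr\!a, \darr\!b, \darr\!c$ force $d \in v((A \otl B) \otl C)$, whence $d \in v(A \otl (B \otl C))$ by $\alpha$, producing the mediator $y$ after upward-closing the first two arguments of $\mbb{L}$. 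The R-variants are obtained by the mirror argument with $\mbb{R}$.

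Finally I would connect the tensor axioms to the closed-form axioms $L, j, i$. The key observation is the semantic adjunction $v(A \otl B) \subseteq v(C) \iff v(A) \subseteq v(B \llolli C)$, which holds identically in every \SkBiCA~frame by a direct logical transposition of the valuation clauses; this transports $\lambda \leftrightarrow j$ and $\alpha \leftrightarrow L$ at once. The case $\rho \leftrightarrow i$ does not align via a single adjunction step, so I would prove $i$ valid $\iff$ LSRU independently, using the bespoke downwards-closed valuation $v(A) = \{b : \exists e \in \mbb{I}, \mbb{L}aeb\}$ (closure following from downwards closure of $\mbb{L}$ in the third argument); this makes $a \in v(\I \llolli A)$ tautologically, hence $a \in v(A)$ by $i$, yielding the desired $e \in \mbb{I}$ with $\mbb{L}aea$. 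The main obstacle I expect is the $L$ case, which involves three nested implications and correspondingly three layers of quantifier-unfolding, so a careful orchestration of principal-downset valuations for $A, B, C$ together with close attention to the directionality of closure on each argument of $\mbb{L}$ will be needed to extract the mediator guaranteed by LSA. Once this bookkeeping is in place every entry of the grid reduces to an instance of the same template.
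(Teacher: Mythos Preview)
Your overall template is correct and matches the paper's for the first two blocks ($\mbb{LR}$-reverse $\leftrightarrow \gamma,\gamma^{-1}$ and the three monoidal correspondences $\alpha\leftrightarrow$ LSA, $\lambda\leftrightarrow$ LSLU, $\rho\leftrightarrow$ LSRU): both you and the paper use principal downsets plus upwards-closure of $\mbb{L}$ to run the converse direction.

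The genuine difference is in the closed-form column. The paper does \emph{not} transport $L,j,i$ from $\alpha,\lambda,\rho$ via the semantic adjunction; instead it proves each of $L\leftrightarrow$ LSA, $j^{\mf{R}}\leftrightarrow$ RSLU, $L^{\mf{R}}\leftrightarrow$ RSA directly, and to do so it must use \emph{non-principal} bespoke valuations, e.g.\ for $L\Rightarrow$ LSA it sets $v(B)=\{y:\mbb{L}bcy\}$ and $v(C)=\{d':\exists y\in v(B),\,\mbb{L}ayd'\}$, exactly because principal downsets alone cannot manufacture the existential mediator. Your remark that ``a careful orchestration of principal-downset valuations for $A,B,C$'' would handle the $L$ case is therefore off: either you go the paper's route with bespoke sets, or you really commit to the adjunction transport you sketch earlier. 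The latter \emph{does} work, but note it is not a single transposition: $L\Rightarrow\alpha$ needs the unit $X\subseteq B\llolli(X\otl B)$ instantiated twice together with monotonicity of $\otl$, and $\alpha\Rightarrow L$ needs the counit; likewise $\rho\leftrightarrow i$ is in fact obtainable this way (compose $\rho$ with the counit at $\I$, resp.\ the unit with $i$), contrary to your aside. Your direct argument for $i\leftrightarrow$ LSRU is fine and coincides with the paper's style. What each approach buys: the adjunction route is shorter and conceptually uniform once the monoidal column is done, and it also dispatches the existential $L^{\mf{R}}$ case cleanly by choosing $X=B\otr C$; the paper's direct route avoids compound formulae in the test valuations and makes the extraction of witnesses completely explicit at the frame level.
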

\begin{proof}
  The first case is that $\mbb{LR}$-reverse holds if and only if $\gamma$ and $\gamma^{-1}$ are valid, i.e. $v(A \otl B) = v(B \otr A)$.
  \begin{itemize}
    \item[$(\longrightarrow)$] For any $x \in v(A\otl B) \subseteq W$, there exists $a \in v(A), b \in v(B)$ and $\mbb{L}abx$.
    By $\mbb{LR}$-reverse, we have $\mbb{R}bax$ meaning that $x \in v(B \otr A)$.
    The other way around is similar.
    \item[$(\longleftarrow)$] Suppose that for any $v,A,B$, we have $v(A \otl B) = v(B \otr A)$.
    Consider any $a, b, x \in W$ such that $\mbb{L}abx$.
    We take $v(A) = a \darr$ and $v(B) = b\darr$ for some $A, B \in \mf{At}$.
    By the definition of $v$ and assumption, $x$ belongs to $v(A \otl B)$ and $v(B \otr A)$, therefore $\mbb{R}bax$.
    The other direction is similar.
  \end{itemize}
    \begin{itemize}
    \item[$\lambda: $] LSLU holds if and only if $\lambda$ is valid.
    \begin{itemize}
      \item[$(\longrightarrow)$] This is similar to case of $\lambda$ in the proof of Theorem \ref{thm:sound}.
      \item[$(\longleftarrow)$] 
      Suppose that $\lambda$ is valid, i.e. for any $A$ and $v$, we have $v(\I \otl A)  \subseteq v(A)$.
      Consider any $a, b \in W$, $e \in \mbb{I}$ such that $\mbb{L}eab$.
      We take $v(A) = a \darr$ for some $A \in \mf{At}$.
      By $\mbb{L}eab$ and the assumption, we know that $b \in v(A)$, which means that $b \leq a$.
    \end{itemize}
    \item[$\rho : $] LSRU holds if and only if $\rho$ is valid.
    \begin{itemize}
      \item[$(\longrightarrow)$] This is similar to case of $\rho$ in the proof of Theorem \ref{thm:sound}.
      \item[$(\longleftarrow)$] 
      Suppose $\rho$ is valid, i.e. for any $A$ and $v$, $v(A) \subseteq v(A \otl \I)$.
      Consider any $a \in W$. 
      We take $v(A) = a \darr$ for some $A \in \mf{At}$.
      By the assumption, there exist $a' \in v(A)$ and $e \in \mbb{I}$ such that $\mbb{L}a'ea$.
      Because $\mbb{L}$ is upwards closed, we know that $\mbb{L}aea$.
    \end{itemize}
    \item[$\alpha: $] LSA holds if and only if $\alpha$ is valid.
    \begin{itemize}
      \item[$(\longrightarrow)$] For any $s \in v((A \otl B) \otl C)$, there exists $a \in v(A), b \in v(B), x \in v(A \otl B), c \in v(C), \mbb{L}abx$, and $\mbb{L}xcs$. 
      By LSA, there exists $y \in W$ such that $\mbb{L}bcy$ and $\mbb{L}ays$, then by definition of $v$, $y \in v(B \otl C)$ and $s \in v(A \otl (B \otl C))$. 
      \item[$(\longleftarrow)$] 
      Suppose that $\alpha$ is valid, i.e. for any $A, B, C, v$, we have $v((A \otl B) \otl C) \subseteq v(A \otl (B \otl C))$.
      Consider any $a,b,x,c,d \in W$ such that $\mbb{L}abx$ and $\mbb{L}xcd$.
      We take $v(A) = a\darr, v(B) = b\darr$, $v(C) = c\darr$ for some $A, B,C \in \mf{At}$, then we know that $x \in v(A \otl B)$ and $d \in v((A \otl B) \otl B)$.
      By the assumption, $d$ belongs to $v(A \otl (B \otl C))$ as well, which means that there exist $a',b',y,c' \in W$ such that $\mbb{L}b'c'y$ and $\mbb{L}a'yd$.
      Because $\mbb{L}$ is upwards closed, we have $\mbb{L}bcy$ and $\mbb{L}ayd$ as desired.
    \end{itemize} 
    \item[$L:$] LSA holds if and only if for any $A,B,C$ and $v$, $v(B \llolli C) \subseteq v((A \llolli B) \llolli (A \llolli C))$. 
    \begin{itemize}
      \item[$(\longrightarrow)$] For any $s \in v(B \llolli C$), we show $s \in v((A \llolli B) \llolli (A \llolli C))$.
      By definition, from assumptions $x \in v(A \llolli B), \ \mbb{L}sxy, \ y \in v(A \llolli C),\  a \in A, \ c \in W$, and $\mbb{L}yac$, we have to prove that $c \in C$.
      By LSA, there exists $x' \in W$ such that $\mbb{L}xax'$ and $\mbb{L}sx'c$. 
      We get $x' \in B$ due to $x \in v(A \llolli B)$.
      Thus, we have $c \in C$ because $s \in v(B \llolli C)$.
      \item[$(\longleftarrow)$] 
      Suppose that for any $A,B,C$ and $v$, we have $v(B \llolli C) \subseteq v((A \llolli B) \llolli (A \llolli C))$.
      Consider $a,b,x,c,d \in W$ such that $\mbb{L}abx$ and $\mbb{L}xcd$.
      Take $v(A) = c\darr$, $v(B) = \{y \mid \mbb{L}bcy \}$, and $v(C) = \{ d' \mid \exists y \in v(B), \mbb{L}ayd' \}$ for some $A,B,C \in \mf{At}$.
      Given any $y \in v(B)$ and any $d' \in W$, if $\mbb{L}ayd'$, then by definition of $v(C)$, $d' \in v(C)$, therefore $a \in v(B \llolli C)$.
      By assumption, $a \in v((A \llolli B) \llolli (A \llolli C))$ as well, which means that, for any $b' \in v(A  \llolli B)$, $x' \in W$ $c' \in v(A)$ and $d' \in W$, if $\mbb{L}ab'x'$, then $x' \in v(A \llolli C)$, and if $\mbb{L}x'c'd'$, then $d' \in C$.
      By the definition of $v(B)$ and assumptions $\mbb{L}abx$ and $\mbb{L}xcd$,  we have $b \in v(A \llolli B)$, $x \in v(A \llolli C)$, therefore $d \in v(C)$, which means that there exists $y \in W$ such that $\mbb{L}bcy$ and $\mbb{L}ayd$.   
    \end{itemize}
    \item[$j^{\mf{R}}:$] RSLU holds if and only if for any $A,B$ and $v$, if $\mbb{I} \subseteq v(A \rlolli B)$, then $v(A) \subseteq v(B)$.
    \begin{itemize}
      \item[$(\longrightarrow)$] By RSLU, for all $a \in v(A)$, there exists $e \in \mbb{I}$ such that $\mbb{R}eaa$, then we have $a \in v(B)$ because $e \in v(A \rlolli B)$.
      \item[$(\longleftarrow)$] 
      Suppose that for any $A, B$ and $v$, if $\mbb{I} \subseteq v(A \rlolli B)$, then $v(A) \subseteq v(B)$.
      Consider any $a \in W$.
      We take $v(A) = a \darr$ and $v(B) = \{ b \mid \exists e \in \mbb{I}, \mbb{R}eab \}$ for some $A,B \in \mf{At}$.
      For any $e' \in \mbb{I}$, $a' \in v(A)$, and $b' \in W$, if $\mbb{R}e'a'b'$, then because $\mbb{R}$ is upwards closed, we have $b' \in v(B)$, which means $e' \in v(A\rlolli B)$.
      Therefore $\mbb{I} \subseteq v(A \rlolli B)$.
      From the assumption, we can now conclude that $v(A) \subseteq v(B)$.
      In particular, $a \in v(B)$, which means that there exists $e \in \mbb{I}$ such that $\mbb{R}eaa$.
    \end{itemize}
    \item[$L^{\mf{R}}:\ $] RSA holds if and only if for any $A,B,C,D$ and $v$, if $v(A) \subseteq v(B \rlolli (C \rlolli D))$ then there exists $X$ such that $v(A) \subseteq v(X \rlolli D)$ and $v(B) \subseteq v(C \rlolli X)$.
    \begin{itemize}
      \item[$(\longrightarrow)$] We expand the assumption first.
    \\
    For any $A, B , C, D$, $a \in v(A)$, and $b, z \in W$, if $b \in v(B)$ and $\mbb{R}abz$ then $z \in v(C \rlolli D)$ and for all $z \in v(C \rlolli D)$, for all $c, d \in W$ if $c \in v(C)$ and $\mbb{R}zcd$, then $d \in v(D)$.
    In other words, for any $z, d \in W$, if there are $a \in v(A)$, $b \in v(B)$, $c \in v(C)$, $\mbb{R}abz$, and $\mbb{R}zcd$, then $d \in v(D)$.
    
    We show that $B \otr C$ satisfies following two statements:
    \begin{itemize}
      \item[--] For any $a \in v(A)$, we show that $a \in v((B \otr C) \rlolli D)$. For any $x \in v(B \otr C)$ and $d \in W$, if $\mbb{R}axd$, then by definition of $\otr$, we have $\mbb{R}bcx$, where $b \in v(B)$ and $c \in v(C)$. By RSA, there exists $z \in W$ such that $\mbb{R}abz$, and $\mbb{R}zcd$. 
      By the expanded assumption, $d \in v(D)$. Therefore $a \in v((B \otr C) \rlolli D)$.
      \item[--] For any $b \in v(B)$, $c \in v(C)$, and $x \in W$, suppose $\mbb{R}bcx$, then $x \in v(B \otr C)$ by definition of $\otr$. Therefore $b \in v(C \rlolli (B \otr C))$.
    \end{itemize} 
    \item[$(\longleftarrow)$] 
    Assume that for any $A,B,C,D$ and $v$, if $v(A) \subseteq v(B \rlolli (C \rlolli D))$, then there exists $X$ such that $v(A) \subseteq v(X \rlolli D)$ and $v(B) \subseteq v(C \rlolli X)$.
    Suppose that we have $a,b,c,d,x\in W$ such that $\mbb{R}axd$ and $\mbb{R}bcx$, then we take $v(A) = a\darr$, $v(B) = b\darr$, $v(C) = c\darr$, and $v(D) = \{d' \mid \exists y,\mbb{R}aby \& \mbb{R}ycd' \}$ for some $A,B,C,D \in \mf{At}$.
    For any $a' \in v(A)$, given any $b' \in v(B)$, $x' \in W$, $c' \in v(C)$, $d' \in W$ such that $\mbb{R}a'b'x'$ and $\mbb{R}x'c'd'$.
    Because $\mbb{R}$ is upwards closed, by the definition of $v(D)$, we have $d' \in v(D)$, which means $v(A) \subseteq v(B \rlolli (C \rlolli D))$.
    By the assumption, there exists $X$ such that
    \begin{enumerate}[(1)]
      \item $v(A) \subseteq v(X \rlolli D)$, which means that for any $a' \in v(A)$, given any $x' \in X$, $d' \in W$, if $\mbb{R}a'x'd'$, then $d' \in v(D)$, and 
      \item $v(B) \subseteq v(C \rlolli X)$, which means that for any $b' \in v(B)$, given any $c' \in v(C)$ and $x' \in W$, if $\mbb{R}b'c'x'$, then $x' \in v(X)$.
    \end{enumerate}
    By $\mbb{R}bcx$, and $(2)$, we know that $x \in v(X)$.
    By $\mbb{R}axd$, and $(1)$, we know that $d \in v(D)$, which means that there exists $y \in W$ such that $\mbb{R}aby$ and $\mbb{R}ycd$.
    \end{itemize}
  \end{itemize}
  The other cases are similar to the arguments above.
\end{proof}
A frame $\langle W, \leq, \mbb{I}, \mbb{L}\rangle$ is left (right) skew associative if $\mbb{L}$ satisfies LSA (RSA).
For other conditions, the naming is similar.
If $\langle W, \leq, \mbb{I}, \mbb{L}\rangle$ satisfies LSA, LSLU, and LSRU (respectively RSA, RSLU, RSRU), then it is a left (respectively right) skew.

We can think of a \SkBiCA~frame $\langle W, \leq, \mbb{I}, \mbb{L}, \mbb{R}\rangle$ as a combination of two ternary frames $\langle W, \leq, \mbb{I}, \mbb{L}\rangle$ (left skew frame) and $\langle W, \leq, \mbb{I}, \mbb{R} \rangle$ (right skew frame) sharing the same set of possible worlds, where the ternary relations are interdefinable by
$\mbb{L}\mbb{R}$-reverse.
Whenever $\mbb{L}\mbb{R}$-reverse holds, then $\langle W, {\leq}, \mbb{I}, \mbb{L} \rangle$ is left skew if and only if $\langle W, \leq, \mbb{I}, \mbb{R} \rangle$ is right skew.
In fact, we have:
\begin{equation*}
    \begin{array}{lcl}
      \langle W, \leq, \mbb{I}, \mbb{L}\rangle \ \text{left skew associative} & \longleftrightarrow & \langle W, \leq, \mbb{I}, \mbb{R}\rangle \ \text{right skew associative}
      \\
      \langle W, \leq, \mbb{I}, \mbb{L}\rangle \ \text{left skew left unital} & \longleftrightarrow & \langle W, \leq, \mbb{I}, \mbb{R}\rangle \ \text{right skew right unital}
      \\
      \langle W, \leq, \mbb{I}, \mbb{L}\rangle \ \text{left skew right unital} & \longleftrightarrow & \langle W, \leq, \mbb{I}, \mbb{R}\rangle \ \text{right skew left unital}
    \end{array}
  \end{equation*}

If we state the structural laws semantically rather than sequents, we can reformulate Theorem \ref{thm:main} without referring to sequents and valuations.
For example, we can define $\otl$ on downwards closed sets of worlds as $A \otl B = \{c : \exists a \in A \ \& \ b \in B \ \& \ \mbb{L}abc  \}$ and express $\alpha$ as $(A \otl B) \otl C \subseteq A \otl (B \otl C)$.
It is the case that $\alpha$ holds in a frame if and only if it satisfies LSA.

We construct a thin \SkBiC~from the frame $\langle W, \leq, \I, \mbb{L}, \mbb{R}\rangle$ and provide algebraic proofs for main theorems in \cite{uustalu:eilenberg-kelly:2020}.
The objects in the category are downwards closed subsets of $W$ and for $A, B$, we have a map $A \to B$ if and only if $A \subseteq B$.
\begin{corollary}
The category $(\mc{P}_{\darr}(W) , \subseteq)$ generated from any \SkBiCA~frame is a thin \SkBiC.
\end{corollary}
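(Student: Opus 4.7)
The plan is to equip $\mc{P}_{\darr}(W)$ with operations mirroring the valuation clauses of Definition \ref{eq:valuation:frame}: set $\I := \mbb{I}$, and define $A \otl B$, $A \llolli B$, $A \otr B$, and $A \rlolli B$ by the same set-builder expressions used there. The monotonicity built into the notion of \SkBiCA~frame (i.e.\ that $\mbb{L}$ and $\mbb{R}$ are upwards closed in their first two arguments and downwards closed in the third) guarantees that these operations take downwards closed sets to downwards closed sets and that $\otl, \otr$ are monotone in each argument; hence all four operations are functorial on the thin category $(\mc{P}_{\darr}(W), \subseteq)$.

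I would then verify the two adjunctions $-\otl B \dashv B \llolli -$ and $-\otr B \dashv B \rlolli -$. In the thin setting these reduce to the biconditionals $A \otl B \subseteq C$ iff $A \subseteq B \llolli C$ (and analogously for $\otr, \rlolli$), each of which unfolds directly to an obvious quantifier-swap on the defining formulas.

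The core content of the corollary is to exhibit the structural inclusions $\lambda : \I \otl A \subseteq A$, $\rho : A \subseteq A \otl \I$, $\alpha : (A \otl B) \otl C \subseteq A \otl (B \otl C)$, and the equality $\gamma: A \otl B = B \otr A$. For this I would directly invoke Theorem \ref{thm:main}: a \SkBiCA~frame by definition satisfies $\mbb{L}\mbb{R}$-reverse together with LSA, LSLU, and LSRU, and these frame conditions are precisely the semantic content of the listed structural laws (applied with the tautological interpretation that assigns each downwards closed set to itself). Since the category is thin, naturality is automatic and all Mac Lane coherence diagrams commute trivially; the right skew components $(\lambda^{\mf{R}}, \rho^{\mf{R}}, \alpha^{\mf{R}})$ are then obtained from the left skew data by conjugation with $\gamma$, exactly as in the Example following Definition \ref{def:SkewBiC}, and the compatibility stipulated in Definition \ref{def:SkewBiC} holds for free because hom-posets are subsingletons. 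The main (modest) obstacle is not any single calculation but keeping an accurate dictionary between the frame axioms and the categorical structural laws; Theorem \ref{thm:main} supplies this dictionary ready-made, so once the operations on $\mc{P}_{\darr}(W)$ are in place, the corollary reduces to a bookkeeping exercise.
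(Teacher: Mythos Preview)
Your proposal is correct and matches the paper's approach: the paper does not give an explicit proof of this corollary, but the paragraph immediately preceding it sets up exactly the construction you describe (defining $\otl$ etc.\ on downwards closed subsets via the same clauses as the valuation) and indicates that Theorem \ref{thm:main} supplies the structural inclusions, with thinness trivializing coherence. Your additional remarks about checking downward closure of the operations and the two adjunctions fill in details the paper leaves implicit, but the overall strategy is the same.
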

A frame $\langle W, \leq, \mbb{I}, \mbb{L}\rangle$ is associative normal if it satisfies LSA and RSA simultaneously, and left (right) unital normal if LSLU and RSLU (LSRU and RSRU) are satisfied.
Therefore, by Theorem \ref{thm:main}, we have a thin version of the main results in \cite{uustalu:eilenberg-kelly:2020}.
\begin{corollary}
  Given any frame, for the category $(\mc{P}_{\darr}(W), \subseteq)$ generated from the frame  we have:
  \begin{displaymath}
    \begin{array}{lcl}
       (\mbb{I}, \otl) \ \text{left skew monoidal} & \longleftrightarrow &  (\mbb{I}, \llolli) \ \text{left skew closed}
      \\
      (\mbb{I}, \otr) \ \text{right skew monoidal} & \longleftrightarrow &  (\mbb{I}, \rlolli) \ \text{right skew closed}
    \end{array}
  \end{displaymath}
  Moreover, if the frame satisfies $\mbb{LR}$-reverse then:
  \begin{displaymath}
    \begin{array}{lcl}
      (\mbb{I}, \otl) \ \text{left skew monoidal} & \longleftrightarrow &  (\mbb{I}, \otr) \ \text{right skew monoidal}
      \\
      (\mbb{I}, \llolli) \ \text{left skew closed} & \longleftrightarrow &  (\mbb{I}, \rlolli) \ \text{right skew closed}
      \\
      (\mbb{I}, \otl) \ \text{associative normal} & \longleftrightarrow &  (\mbb{I}, \otr) \ \text{associative normal}
      \\
      (\mbb{I}, \otl) \ \text{left unital normal} & \longleftrightarrow &  (\mbb{I}, \otr) \ \text{right unital normal}
      \\
      (\mbb{I}, \otl) \ \text{right unital normal} & \longleftrightarrow &  (\mbb{I}, \otr) \ \text{left unital normal}
      \\
      (\mbb{I}, \llolli) \ \text{associative normal} & \longleftrightarrow &  (\mbb{I}, \rlolli) \ \text{associative normal}
      \\
      (\mbb{I}, \llolli) \ \text{left unital normal} & \longleftrightarrow &  (\mbb{I}, \rlolli) \ \text{right unital normal}
      \\
      (\mbb{I}, \llolli) \ \text{right unital normal} & \longleftrightarrow &  (\mbb{I}, \rlolli) \ \text{left unital normal}
    \end{array}
  \end{displaymath}
\end{corollary}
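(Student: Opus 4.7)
The plan is to derive the corollary mechanically from Theorem~\ref{thm:main} combined with the $\mbb{LR}$-reverse correspondences listed just above. Since $(\mc{P}_{\darr}(W), \subseteq)$ is thin, every Mac Lane coherence diagram commutes for free, so any of the skew structures mentioned in the statement collapses to the validity of the corresponding family of structural inclusions: $\alpha, \lambda, \rho$ for left skew monoidal; $L, j, i$ for left skew closed; the right-skew counterparts for the right-skew versions; and $\gamma, \gamma^{-1}$ bridging the two tensors. I will chain these through Theorem~\ref{thm:main} to frame conditions, and where needed use $\mbb{LR}$-reverse to swap the relation.

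For the first block, $(\mbb{I}, \otl)$ being left skew monoidal amounts to the joint validity of $\alpha$, $\lambda$, and $\rho$. By Theorem~\ref{thm:main}, this is equivalent to $\mbb{L}$ satisfying LSA, LSLU, and LSRU, which by the same theorem is equivalent to the joint validity of $L$, $j$, and $i$, i.e.\ to $(\mbb{I}, \llolli)$ being left skew closed. The right-skew line is obtained by replacing $\mbb{L}$ with $\mbb{R}$ throughout and applying the right-skew rows of the theorem.

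For the second block, under $\mbb{LR}$-reverse I invoke the three correspondences (LSA for $\mbb{L}$ iff RSA for $\mbb{R}$, LSLU for $\mbb{L}$ iff RSRU for $\mbb{R}$, LSRU for $\mbb{L}$ iff RSLU for $\mbb{R}$) to transfer each frame condition between the two relations; chained with Theorem~\ref{thm:main}, this at once yields the four non-normal rows. For the normality rows, the key observation is that in a thin category a natural transformation is an isomorphism iff both its forward and reverse inclusions hold, and the reverse of $\alpha$ on $\otl$ corresponds to RSA on $\mbb{L}$ (and similarly $\lambda^{-1}$, $\rho^{-1}$ correspond to RSRU, RSLU on $\mbb{L}$). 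Hence $\otl$ is associative normal iff $\mbb{L}$ satisfies both LSA and RSA; $\mbb{LR}$-reverse then transfers this pair to RSA and LSA on $\mbb{R}$, i.e.\ to $\otr$ being associative normal. The two unital-normal rows follow in the same spirit, with the swap LSLU $\leftrightarrow$ RSRU (respectively LSRU $\leftrightarrow$ RSLU) producing the crossed left/right pattern visible in the statement. The main obstacle I anticipate is that the correspondence between the \emph{inverse} structural laws on a single tensor and the opposite-skew frame conditions on the \emph{same} relation is not literally packaged in Theorem~\ref{thm:main}; the first technical step will therefore be to extend that theorem with these companion rows, proved by a parallel argument to the existing cases, after which all nine rows of the corollary become direct substitutions.
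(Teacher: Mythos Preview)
Your plan is correct and matches the paper's intent: the paper does not give a separate proof of this corollary, treating it as an immediate consequence of Theorem~\ref{thm:main} together with the $\mbb{LR}$-reverse transfer of frame conditions and the definition of normal frames as those satisfying both the left- and right-skew condition on a single relation. Your proposal spells this out in the right order (thinness collapses coherence to inclusions; Theorem~\ref{thm:main} trades inclusions for frame conditions; $\mbb{LR}$-reverse swaps relations), and you correctly flag that the inverse structural laws on $\otl$ (resp.\ $\llolli$) correspond to the opposite-skew conditions on the \emph{same} relation $\mbb{L}$, which is a relabelled instance of the argument for Theorem~\ref{thm:main} rather than a literal citation of it. That is exactly how the paper's framing (``a frame is associative normal if it satisfies LSA and RSA simultaneously'') is meant to be read.

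One minor remark: your counts are slightly off (the second block has two non-normal rows and six normality rows, for ten rows total), and for the closed normality rows it is cleaner to go through the tensor side via the adjunction $-\otl B \dashv B \llolli -$ (which in a thin category makes the $(\lambda,\rho,\alpha)\leftrightarrow(j,i,L)$ bijection an equivalence of the corresponding validity statements) rather than arguing directly about $L^{-1}$; this avoids having to unpack the coend-style form of $L^{\mf{R}}$. None of this affects the correctness of your approach.
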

\section{Concluding remarks}
This paper discusses sequent calculi for left (right) skew monoidal categories and skew monoidal bi-closed categories in the style of non-associative Lambek calculus.
Compared to the sequent calculi with stoup, although the calculi {\`a} la Lambek are not immediately decidable but are more flexible in the sense that the sequent calculi for right skew monoidal closed categories (\RSkT) and skew monoidal bi-closed categories (\SkBiCT) are presentable.
Moreover, we show that they are cut-free and equivalent to the calculus with stoup (Theorem \ref{thm:equiv:LSkGLSKT}) and the axiomatic calculus (Theorem \ref{thm:equiv:SkBiC}).

In the last section, we focus on the relational semantics of \SkBiCA~via the ternary frame \linebreak $\langle W, {\leq}, \mbb{I}, \mbb{L}, \mbb{R} \rangle$ where $\mbb{L}$ and $\mbb{R}$ are connected by $\mbb{LR}$-reverse and therefore if $\mbb{L}$ satisfies left skew structural conditions then $\mbb{R}$ satisfies right skew structural conditions automatically.
By Theorem \ref{thm:main}, for any \SkBiCA~model, we can construct a thin skew monoidal bi-closed category $(\mc{P}_{\darr}(W) , \subseteq)$.
In addition, we can obtain algebraic proofs of main theorems in \cite{uustalu:eilenberg-kelly:2020}.

A future project is to explore Craig interpolation \cite{craig:interpolation:1957} for semi-substructural logics.
In \LSkT, the situation is more complicated than either associative or fully non-associative Lambek calculi because we only allow semi-associativity.
Consider the statement:
\begin{itemize}
    \item[\ ] Given a derivation, $f : T[U] \vdT C$, then there exist a formula $D$ and two derivations $f_0 : U \vdT D$ and $f_1: T[D] \vdT C$, and $\mf{var} (D) \subseteq \mf{var} (\Gd (U)) \cap \mf{var}(\Gd (T[{-}]), C)$, where $\Gd$ is a function that transforms a tree into a list of formulae.
\end{itemize}
If we try to prove by induction on $f$, then there is a critical case
\begin{displaymath}
  \small\begin{array}{c}
    \infer[\mf{assoc}]{T[(U_0, U_1), U_2] \vdT C}{
      \deduce{T[U_0, (U_1, U_2)] \vdT C}{f}
    }
  \end{array}
\end{displaymath}
where $U = U_0, U_1$, therefore, the goal is to find a formula $D$ and two derivations $g: U_0, U_1 \vdT D$ and $T[D,U_2] \vdT C$.
However, we cannot directly apply the inductive hypothesis twice on $f$, because the procedure of finding an interpolant formula and corresponding derivations is not height preserving.
Therefore, proving the interpolation property for semi-substructural logics is more subtle than expected.

Another possible direction is to incorporate modalities (exponentials in linear logical terminology) with semi-substructural logic as in \cite{moortgat:multimodl:1996} (modalities) and \cite{Blaisdell:subexpon:linear:2022} (subexponentials) with non-associative Lambek calculus and non-commutative and non-associative linear logic.

Similar to the equational theories for \SkBiCA~discussed in Section \ref{sec:calculi:skbic}, we also plan to investigate the equational theories on the derivations of \LSkT~and \SkBiCT~in the future.

\paragraph{Acknowledgements} 
We thank Giulio Fellin, Tarmo Uustalu, and Niccol{\`o} Veltri for invaluable discussions and the anonymous reviewers for constructive feedback and comments.
Special thanks to Tarmo Uustalu and Niccol{\`o} Veltri for thorough review, for highlighting some inaccuracies in the draft, and their assistance in resolving these issues.
This work was supported by the Estonian Research Council grant PSG749.
\bibliographystyle{eptcs}
\bibliography{ncl24}
\end{document}